\newtheorem{theorem}{Theorem}
\newtheorem{lemma}[theorem]{Lemma}
\newtheorem{corollary}[theorem]{Corollary}
\title{On minimum Venn diagrams}
\author{Sofia Brenner}
\address[Sofia Brenner]{Institut f\"ur Mathematik, Universit\"at Kassel, Germany}
\email{sbrenner@mathematik.uni-kassel.de}
\author{Petr Gregor}
\address[Petr Gregor]{Department of Theoretical Computer Science and Mathematical Logic, Charles University, Prague, Czech Republic}
\email{gregor@ktiml.mff.cuni.cz}
\author{Torsten M\"utze}
\address[Torsten M\"utze]{Institut f\"ur Mathematik, Universit\"at Kassel, Germany}
\email{tmuetze@mathematik.uni-kassel.de}
\author{Francesco Verciani}
\address[Francesco Verciani]{Institut f\"ur Mathematik, Universit\"at Kassel, Germany}
\email{francesco.verciani@mathematik.uni-kassel.de}
\thanks{Sofia Brenner, Torsten M\"utze and Francesco Verciani were supported by German Science Foundation grant~522790373.}
\begin{document}

\begin{abstract}
An \emph{$n$-Venn diagram} is a diagram in the plane consisting of $n$ simple closed curves that intersect only finitely many times such that  each of the $2^n$ possible intersections is represented by a single connected region.
An $n$-Venn diagram has at most $2^n-2$ crossings, and if this maximum number of crossings is attained, then only two curves intersect in every crossing.
To complement this, Bultena and Ruskey considered $n$-Venn diagrams that minimize the number of crossings, which implies that many curves intersect in every crossing.
Specifically, they proved that the total number of crossings in any $n$-Venn diagram is at least $L_n\ass\lceil\frac{2^n-2}{n-1}\rceil$, and if this lower bound is attained then essentially all $n$ curves intersect in every crossing.
Diagrams achieving this bound are called \emph{minimum} Venn diagrams, and are known only for $n\leq 7$.
Bultena and Ruskey conjectured that they exist for all $n\geq 8$.
In this work, we establish an asympototic version of their conjecture.
For $n=8$ we construct a diagram with 40 crossings, only 3 more than the lower bound~$L_8=37$.
Furthermore, for every $n$ of the form $n=2^k$ for some integer $k\geq 4$, we construct an $n$-Venn diagram with at most $(1+\frac{33}{8n})L_n=(1+o(1))L_n$ many crossings.
Via a doubling trick this also gives $(n+m)$-Venn diagrams for all $0\leq m<n$ with at most $40\cdot 2^m$ crossings for $n=8$ and at most $(1+\frac{33}{8n})\frac{n+m}{n}L_{n+m}=(2+o(1))L_{n+m}$ many crossings for $k\geq 4$.
In particular, we obtain $n$-Venn diagrams with the smallest known number of crossings for all $n\geq 8$.
Our constructions are based on partitions of the hypercube into isometric paths and cycles, using a result of Ramras.
\end{abstract}

\maketitle

\section{Introduction}
\label{sec:intro}

An \defi{$n$-Venn diagram} is a collection of $n\geq 1$ simple closed curves in the plane that intersect in only finitely many points and that create exactly $2^n$ connected regions, one for every possible combination of being inside or outside with respect to each curve.
It is easy to see that in any intersection point of at least two curves, at least two of them must cross, and thus we refer to any such intersection point as a \defi{crossing}.
A Venn diagram is \defi{simple} if every crossing involves only two curves; see Figure~\ref{fig:dia1}.

Venn diagrams are an appealing tool to visualize sets and their containment relations, and they are named after the English mathematician John Venn (1834--1923), who used them in the context of propositional logic~\cite{venn_1880}.
Despite the simple definition, many questions about Venn diagrams lead to interesting and challenging mathematical and computational problems, which triggered a long and fruitful line of research devoted to them; see Ruskey and Weston's survey~\cite{MR1668051} and the many beautiful illustrations therein.
These problems touch and connect various areas such as discrete geometry, graph drawing, graph theory, poset theory, coding theory, and enumerative combinatorics.

\begin{figure}[t]
\centerline{
\includegraphics[page=1]{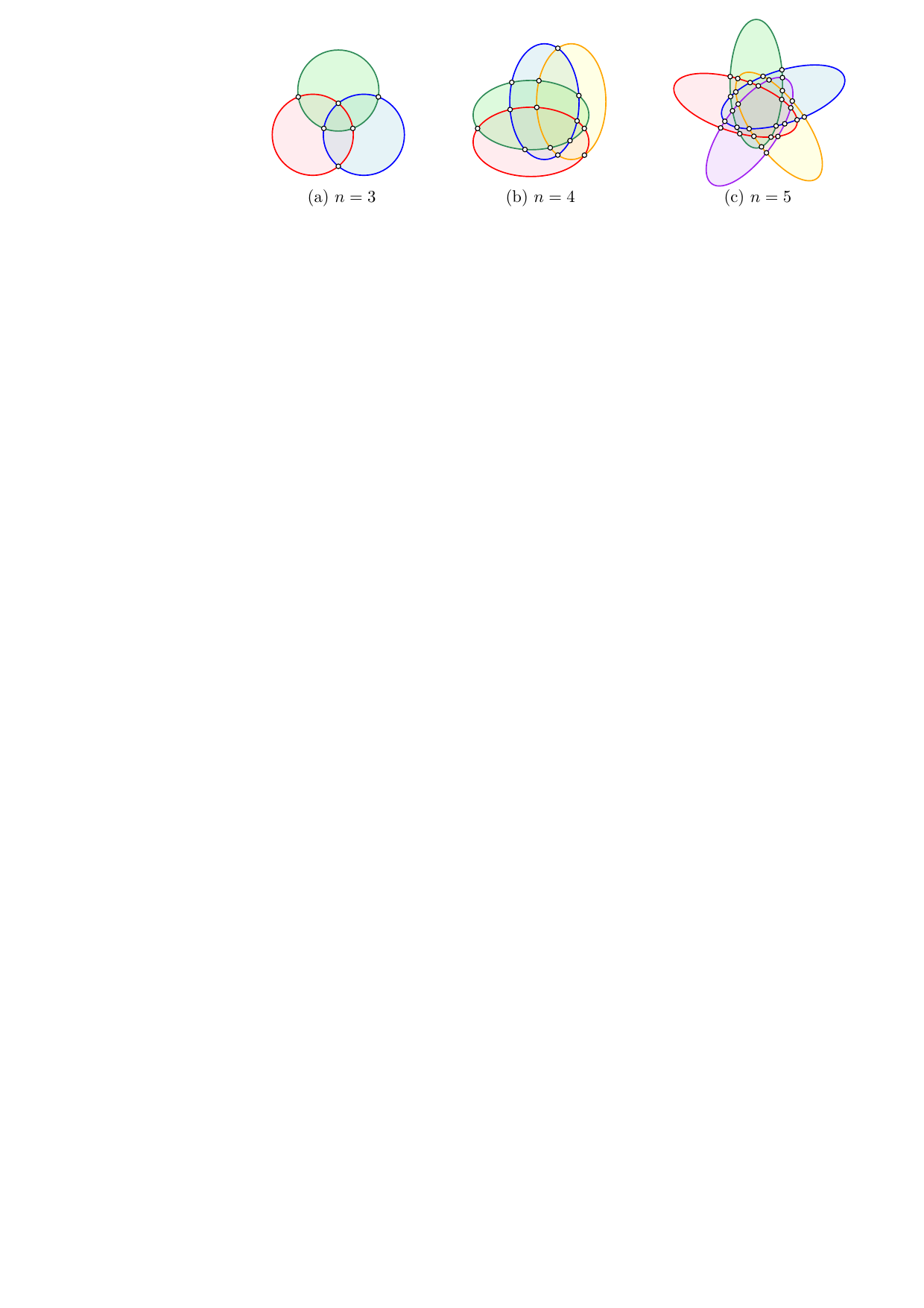}
}
\caption{Simple Venn diagrams for $n=3,4,5$, maximizing the number of crossings.
For $n=3,4,5$ the number of crossings $2^n-2$ equals~$6,14,30$.
The diagrams (a)+(c) are rotationally symmetric.
All diagrams are monotone and convex.}
\label{fig:dia1}
\end{figure}

General constructions of $n$-Venn diagrams, valid for every $n\geq 1$, were provided already by Venn~\cite{venn_1880} and much later by Edwards~\cite{edwards_1989}.
In fact, these two constructions are connected via the well-known binary reflected Gray code, a listing of all $2^n$ binary strings of length~$n$ such that any two consecutive strings differ in a single bit.

Particularly pleasing for the human eye are rotationally \defi{symmetric} Venn diagrams, such as the ones in Figure~\ref{fig:dia1}~(a)+(c) and Figure~\ref{fig:dia2}~(a)+(e).
They exist if and only if $n\geq 2$ is a prime number.
The necessity of this condition was established by Henderson~\cite{MR1532112}, and the sufficiency, i.e., a construction valid for all prime~$n$ was shown by Griggs, Killian and Savage~\cite{MR2034416}.
Their approach builds a symmetric chain partition in the so-called necklace poset.
The problem to find \emph{simple} symmetric $n$-Venn diagrams is open in general.
Solutions for small cases are only known for $n=1,3,5,7,11,13$ \cite{MR3231031}, and for general prime~$n$ a construction is known that guarantees at least half of all crossings to be simple~\cite{MR2114190}.

A \defi{$k$-region} in an $n$-Venn diagram is a region that lies inside of exactly $k$ of the curves, and outside the remaining $n-k$ curves.
A \defi{monotone} diagram is one in which for every $0<k<n$, every $k$-region is adjacent to both a $(k-1)$-region and a $(k+1)$-region.
A \defi{convex} diagram is one in which every curve is convex.
It is not hard to show that convex diagrams are monotone.
Bultena, Gr\"unbaum and Ruskey~\cite{DBLP:conf/cccg/BultenaGR99} also proved the converse, namely that every monotone diagram is isomorphic to a convex one.
Thus, the combinatorial notion of monotonicity completely captures the geometric notion of convexity.

The number of non-isomorphic simple $n$-Venn diagrams for $n=1,\ldots,6$ is $1,1,1,1,20,3\,430\,404$, and the number of monotone simple diagrams is $1,1,1,1,11,32\,255$~\cite{MR1486434,MR1789063,venn_preprint} (OEIS~A386795 and~A390247, respectively).

A well-known conjecture in the area, raised by Peter Winkler~\cite{MR777726} in 1984 and reiterated in~\cite{DBLP:journals/cacm/Winkler12}, was that every simple $n$-Venn diagram can be extended to a simple $(n+1)$-Venn diagram by adding a suitable curve.
Very recently, Winkler's conjecture was disproved by Brenner, Kleist, Mütze, Rieck, and Verciani~\cite{venn_preprint}, who constructed counterexamples to the conjecture for all~$n \geq 6$.
In particular, out of the $3\,430\,404$ many 6-Venn diagrams, 72 are not extendable to a 7-Venn diagram.
Already earlier, Gr\"unbaum~\cite{MR1208440} proposed a variant of Winkler's conjecture, by dropping the requirement for the diagrams to be simple.
Gr\"unbaum's conjecture was settled affirmatively by Chilakamarri, Hamburger and Pippert~\cite{MR1400982}, using a classical theorem in graph theory of Whitney, later generalized by Tutte.

\subsection{Maximizing and minimizing the number of crossings}

\begin{figure}[b!]
\centerline{
\includegraphics[page=2]{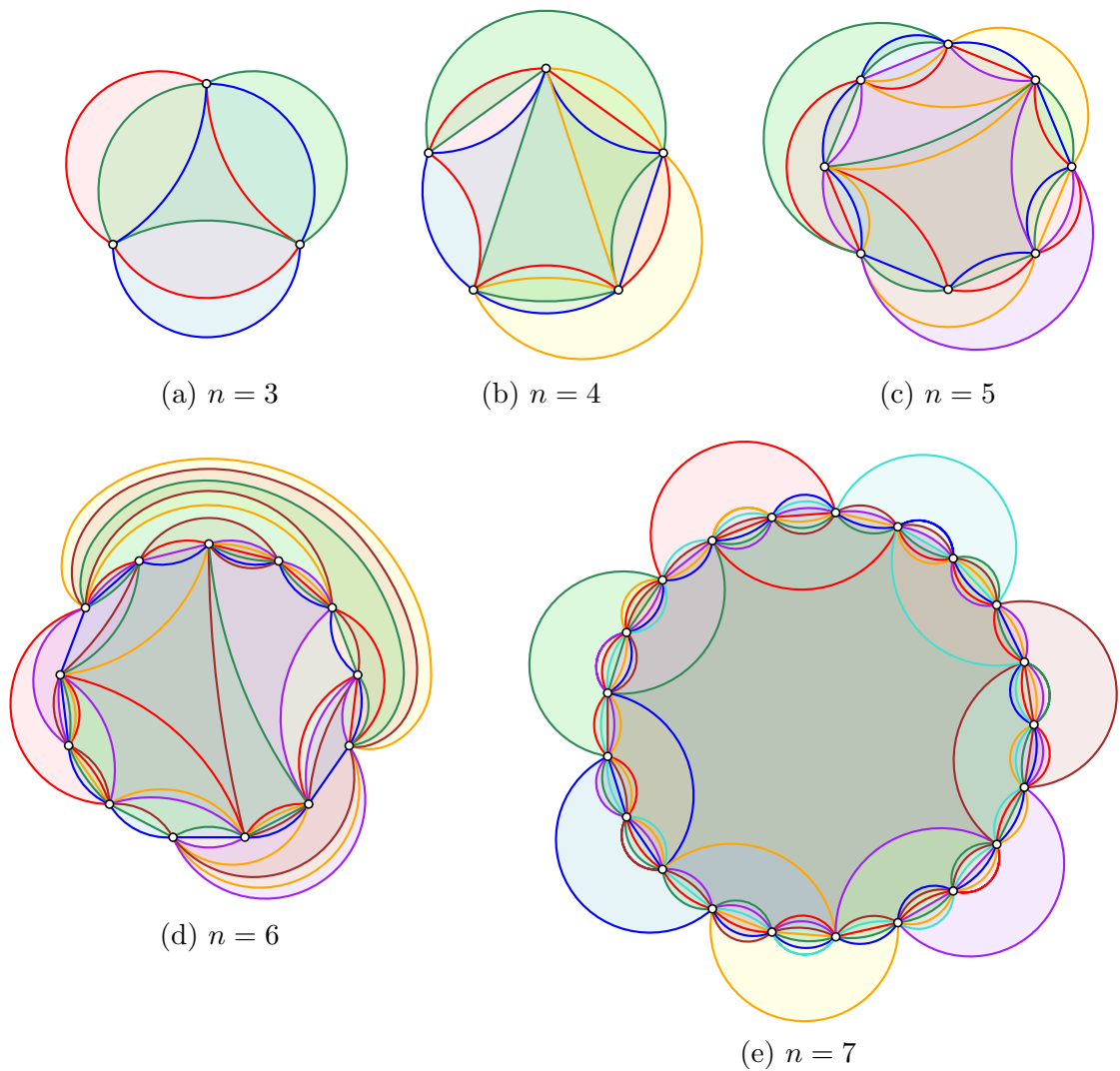}
}
\caption{Non-simple Venn diagrams for $n=3,4,5,6,7$ that minimize the number of crossings.
For those values of~$n$, the number of crossings $L_n=\lceil\frac{2^n-2}{n-1}\rceil$ equals~$3,5,8,13,21$.
Only in the diagrams~(a) and~(e) every crossing involves all $n$ curves.
The diagrams~(a) and~(e) are symmetric.
The diagram~(a) is monotone, while (b)--(e) are not monotone.
None of the diagrams is convex, though (a) can be made convex by changing the radii of the circular arcs.}
\label{fig:dia2}
\end{figure}

In this work, we are particularly interested in the number of crossings in an $n$-Venn diagram.
It is easy to see that simple $n$-Venn diagrams have exactly $2^n-2$ many crossings, and this is the largest possible number of crossings among all $n$-Venn diagrams.
To complement this, Bultena and Ruskey~\cite{MR1649092} considered Venn diagrams with the smallest possible number of crossings.
They proved that any $n$-Venn diagram with $n\geq 2$ has at least
\[ L_n\ass\left\lceil\frac{2^n-2}{n-1}\right\rceil \]
many crossings, and they called diagrams achieving this lower bound \defi{minimum} Venn diagrams.
If $(2^n-2)/(n-1)$ is integral, which happens for $n=2,3,7,19,43,55,127,163,\ldots$ (this is OEIS sequence~A014741 incremented by~1), then this means that every crossing of the diagram involves all $n$ curves.
So far, minimum Venn diagrams are only known for $n\leq 7$; see Figure~\ref{fig:dia2}.
Bultena and Ruskey~\cite{MR1649092} conjectured that minimum $n$-Venn diagrams exist for all $n\geq 8$, and this is also mentioned as an open problem in Ruskey and Weston's survey~\cite{MR1668051}.

As a partial result, Bultena and Ruskey~\cite{MR1649092} proved that among \emph{monotone} $n$-Venn diagrams with $n\geq 2$, the smallest possible number of crossings is $\binom{n}{\lfloor n/2\rfloor}$, and this lower bound is achievable.
Note that $\binom{n}{\lfloor n/2\rfloor}=\frac{2^n}{\sqrt{\pi n/2}}(1+o(1))$, so this is still a $\Theta(\sqrt{n})$-factor away from the general lower bound~$L_n$.

\subsection{Our results}
\label{sec:results}

Our first construction yields $n$-Venn diagrams for the case when $n$ is a power of~2 in which the number of crossings is minimized up to a $(1+o(1))$-factor (as $n\rightarrow \infty$).

\begin{theorem}
\label{thm:min}
There is an $8$-Venn diagram with $40$ crossings, and for every $k\ge 4$ and $n\ass 2^k$, there is an $n$-Venn diagram with exactly
\[ \left(1+\frac{33}{8n}-\frac{2}{2^{n/2}}-\frac{2n}{2^n}\right)\frac{2^n}{n}\le \left(1+\frac{33}{8n}\right)L_n=(1+o(1))L_n \]
many crossings.
\end{theorem}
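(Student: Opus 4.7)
The plan is to work with the planar dual of the diagram: a spanning subgraph $H$ of the hypercube $Q_n$, whose vertices are the $2^n$ regions (labeled by subsets of $[n]$) and whose edges join regions sharing a boundary arc. Edges of $H$ inherit the coloring of $Q_n$ by the bit they flip, and the curve $C_i$ of the Venn diagram is recovered by concatenating the bit-$i$ edges of $H$ in the cyclic order imposed by the planar embedding. A face of $H$ of length $2c$ then corresponds to a crossing involving $c$ curves; in the optimum case where every face has length $2n$, its boundary decomposes into two geodesics from some $v \in Q_n$ to its antipode $\bar{v}$. Thus constructing a near-minimum diagram reduces to selecting and pairing isometric paths (and cycles) of $Q_n$ in a way compatible with a planar embedding.

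The central ingredient is a theorem of Ramras which, for $n = 2^k$, provides a partition of $Q_n$ into isometric paths and cycles of controlled length. I would use this partition to build $H$ by pairing pieces into the two halves of $2n$-cycles bounding the vortex faces (crossings where all $n$ curves meet). The leading term $2^n/n$ in the stated bound arises as the number of such pairs. The positive correction $\tfrac{33\cdot 2^n}{8n^2}$ would account for the small density of auxiliary smaller-degree crossings needed to absorb the portion of the Ramras partition that does not pair perfectly; the negative corrections $-\tfrac{2^{n/2+1}}{n}$ and $-2$ would come, respectively, from an explicit saving in the recursive structure of the construction (plausibly at the middle Hamming level, where a subcube of dimension $n/2$ organizes the pairing) and from the outer face contribution in Euler's formula. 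For $n=8$, where the general estimate yields only about $42.5$ crossings, I would instead exhibit an explicit $8$-Venn diagram realizing exactly $40$ crossings---by a computer-aided or hand-crafted drawing---and verify the count directly.

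The main obstacle will be ensuring that the construction is genuinely planar and that each curve $C_i$ emerges as a \emph{single} simple closed curve rather than several disjoint cycles. Pairing pieces of the Ramras partition is purely combinatorial, but the planar embedding imposes a rotation system at every vertex that must simultaneously yield a planar graph \emph{and} realize the bit-$i$ subgraph as a single closed curve for each $i$. I expect to enforce both conditions by arranging the isometric paths in a nested or layered fashion, mirroring the recursive structure of Ramras' construction for $n=2^k$, and by carefully choosing the cyclic order of curve directions at each vortex so that adjacent vortices splice their bit-$i$ arcs into one cycle. Once these consistency conditions are in place, the crossing count follows by an immediate application of Euler's formula together with the Ramras partition parameters.
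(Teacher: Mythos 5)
Your skeleton matches the paper's: both pass to the dual spanning subgraph of $Q_n$, both start from Ramras's isometric path partition for $n=2^k$ (lifted to a partition of $Q_n$ into isometric cycles of length $2n$), both embed the pieces in a nested fashion and count faces. But the proposal is missing the two ideas that actually produce the constant $1+o(1)$, and without them the construction stalls at $2+o(1)$. If you nest the $2^d$ cycles ($d=2^k-k-1$) concentrically and join consecutive cycles by the generic four connecting edges, every annulus between consecutive cycles contributes four faces, and the total is $2+4(2^d-1)=2\cdot\frac{2^n}{n}-2$, i.e., twice the target. The paper's gain comes from (a) replacing Ramras's basis $B_k$ by a different basis $C_k$ of the same span, chosen to contain many pairs $\{a,a+2\}$: when consecutive cycles differ by such a pair, a three-edge gadget creates one $6$-face plus two $2n$-faces (three faces instead of four); and (b) ordering the cycles by a Hamiltonian path in $Q_d$ whose flip sequence has long monotone runs, so that consecutive $6$-faces share a cycle edge whose deletion merges them, saving one further face per run element. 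The resulting count $2\cdot\frac{2^n}{n}-\nu_\rho(P)-2\lambda_\rho(P)-2$ reaches $\bigl(1+\frac{33}{8n}-\cdots\bigr)\frac{2^n}{n}$ precisely because almost every step of that path lies in a run ($\lambda_\rho\approx 2^n/(2n)$); building such a long-run Gray code is a separate lemma (itself constructed recursively from the same cycle partition), and that is where the constant $\frac{33}{8}=2\cdot\frac{25}{8}-\frac{17}{8}$ actually originates --- it is not an independently tunable density of small crossings, and your guessed sources for the $-\frac{2}{2^{n/2}}$ and $-\frac{2n}{2^n}$ terms are likewise reverse-engineered (only the ``outer face'' reading of the $-2$ is roughly right: it is the outer plus innermost face).

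Two further gaps. First, for $n=8$ you defer to an unexhibited hand- or computer-crafted drawing; the paper instead obtains $40=2\cdot\frac{2^8}{8}-6-2\cdot 8-2$ from the very same machinery, using a short explicit flip sequence in $Q_4$ with $\nu_3=6$, $\lambda_3=8$, so as written your base case is an unproved claim rather than a proof. Second, you correctly flag that each curve must close up into a single simple closed curve (condition~\circled{3}), but ``carefully choosing the cyclic order at each vortex'' is not an argument: in the paper this is a genuine induction over the nested cycles, with a case analysis depending on which gadget joins $C(x_i)$ to $C(x_{i+1})$ and, crucially, on the deleted run edges --- the deletions are exactly where naive splicing can disconnect a curve, and handling them requires routing around the merged face. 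So you have identified the right framework, but the basis change, the long-run ordering with edge deletions (the step that halves the count and fixes the constant), the explicit $n=8$ witness, and the single-curve verification are all missing, and each is essential to the theorem as stated.
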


The 8-Venn diagram with 40 crossings is shown in Figure~\ref{fig:v8-primal}.

\begin{figure}[b!]
\centerline{
\includegraphics{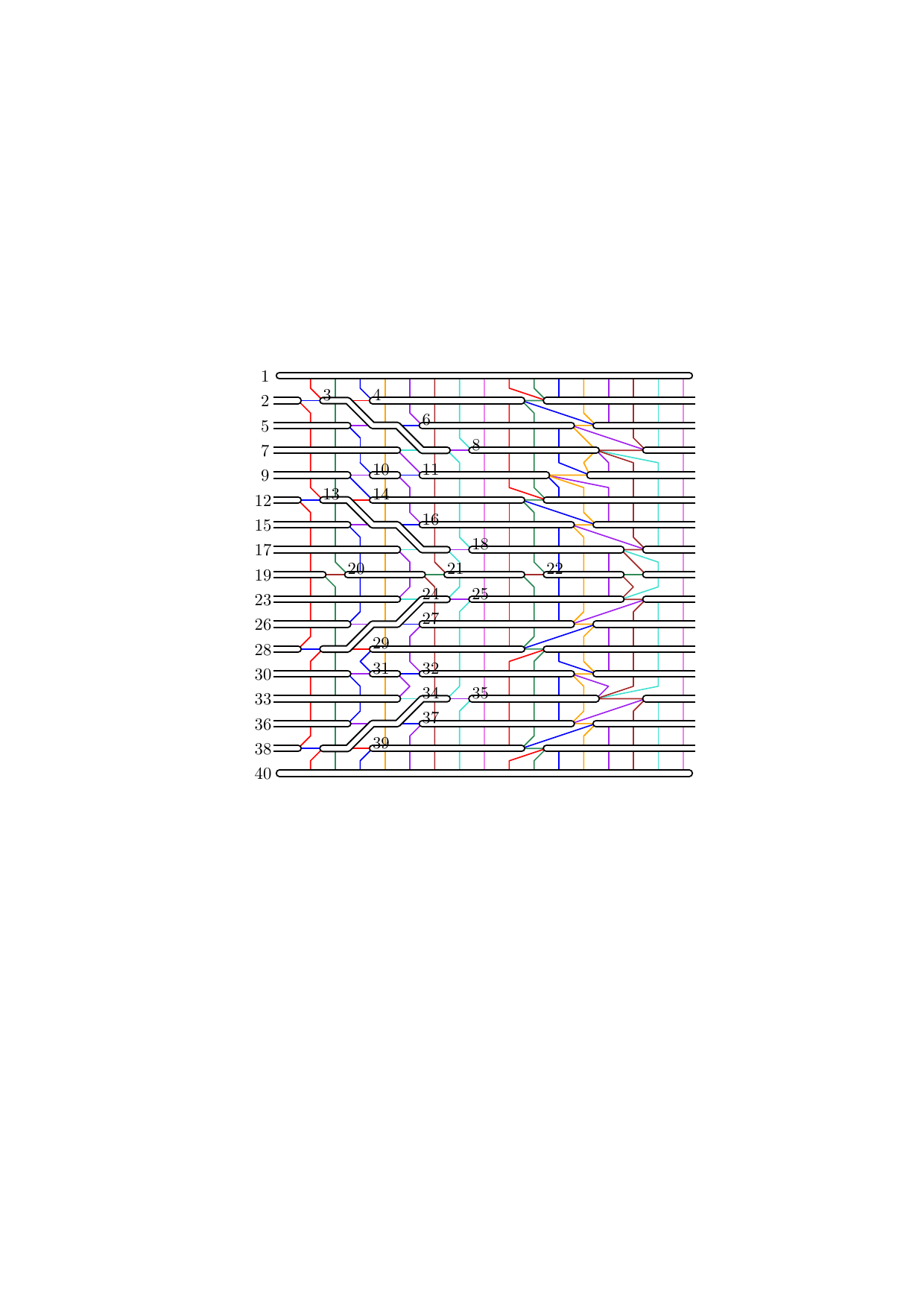}
}
\caption{An 8-Venn diagram with 40 crossings obtained from Theorem~\ref{thm:min}.
The 8 curves are drawn with colors.
Each black/white bubble represents one crossing that has to be contracted to a single point, where the open bubbles at the left and right boundary wrap around at the bottom as if drawn on a cylinder.}
\label{fig:v8-primal}
\end{figure}

For a general number of curves, i.e., when $n$ is not necessarily a power of~2, we apply a doubling construction to obtain an $n$-Venn diagram in which the number of crossings is minimized up to a $(2+o(1))$-factor.

\begin{theorem}
\label{thm:min-all}
For every $0\le m \le 7$ there is an $(8+m)$-Venn diagram with $40\cdot 2^m$ crossings, and for every $k\geq 4$, $n\ass 2^k$ and $0\leq m<n$, there is an $(n+m)$-Venn diagram with exactly
\[ \left(1+\frac{33}{8n}-\frac{2}{2^{n/2}}-\frac{2n}{2^n}\right)\frac{2^{n+m}}{n}\le \left(1+\frac{33}{8n}\right)\frac{n+m}{n}L_{n+m}\le(2+o(1))L_{n+m} \]
many crossings.
\end{theorem}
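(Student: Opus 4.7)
The plan is to deduce Theorem~\ref{thm:min-all} from Theorem~\ref{thm:min} via a general doubling lemma: given any $N$-Venn diagram with $C$ crossings, one can construct an $(N+1)$-Venn diagram with exactly $2C$ crossings. Iterating this lemma $m$ times, starting from the $n$-Venn diagrams produced by Theorem~\ref{thm:min} (with $n=8$ or $n=2^k$, $k\geq 4$), yields an $(n+m)$-Venn diagram whose crossing count is $2^m$ times the one given there, which matches the first equality in each of the two claims.

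For the doubling lemma, my strategy is to take two mirror-image copies $\mathcal{D}$ and $\mathcal{D}^*$ of the given $N$-Venn diagram, place them side by side in the plane, and glue each pair of corresponding curves $C_i$ and $C_i^*$ through a shared strip into a single closed curve $\widetilde{C}_i$. The gluing arcs are nested inside the strip in the cyclic order in which the curves meet the strip boundary, so that no two of them cross. The $(N+1)$-th curve is then drawn so as to separate the $\mathcal{D}$-side from the $\mathcal{D}^*$-side, routed through the strip in the gaps between consecutive gluing arcs in order to avoid crossing any $\widetilde{C}_i$. The $2^{N+1}$ regions of the resulting diagram are the $2^N$ regions of $\mathcal{D}$ (corresponding to subsets containing $N+1$) together with the $2^N$ regions of $\mathcal{D}^*$ (corresponding to those not containing $N+1$), each of which remains connected, so the construction produces a valid $(N+1)$-Venn diagram with exactly $2C$ crossings.

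Given the doubling lemma, Theorem~\ref{thm:min-all} follows by induction on $m$, together with the lower bound estimate
\[ L_{n+m}=\left\lceil\frac{2^{n+m}-2}{n+m-1}\right\rceil\geq\frac{2^{n+m}}{n+m}, \]
which lets one rewrite $\frac{2^{n+m}}{n}=\frac{n+m}{n}\cdot\frac{2^{n+m}}{n+m}\leq\frac{n+m}{n}L_{n+m}$. Multiplying the bound from Theorem~\ref{thm:min} by $2^m$ and applying this estimate yields the stated $(1+\frac{33}{8n})\frac{n+m}{n}L_{n+m}$ bound; since $m<n$ the factor $\frac{n+m}{n}$ is less than $2$, so together with $\frac{33}{8n}\to 0$ this gives $(2+o(1))L_{n+m}$.

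The main obstacle I anticipate is the topological verification of the doubling construction — that the gluing arcs and the $(N+1)$-th curve can simultaneously be drawn without creating extra crossings, and that every one of the $2^{N+1}$ subsets corresponds to a single connected region. This amounts to careful planar bookkeeping, which should be manageable using the fact that the curves of a Venn diagram induce a well-defined cyclic order along any separating arc.
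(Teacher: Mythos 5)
Your reduction of the theorem to a doubling lemma plus the estimate $\frac{2^{n+m}}{n}=\frac{n+m}{n}\cdot\frac{2^{n+m}}{n+m}\le\frac{n+m}{n}L_{n+m}$ is exactly the paper's plan, and that arithmetic is fine. The gap is in the doubling lemma itself, which you assert for an \emph{arbitrary} $N$-Venn diagram: your construction is topologically impossible as described. The new curve is required to separate the $\mathcal{D}$-side from the $\mathcal{D}^*$-side, but each of the $2N$ pairwise disjoint gluing arcs joins a point of some $C_i$ (inside the new curve) to a point of $C_i^*$ (outside it), so by the Jordan curve theorem the new curve must cross every gluing arc an odd number of times; hence it cannot ``avoid crossing any $\widetilde{C}_i$'', and it picks up at least $2N$ crossing points absent from the two copies. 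An Euler count shows this is not just a routing defect. In an $N$-Venn diagram with $C$ crossings, a crossing through which $c_v$ curves pass has degree $2c_v$ in the arrangement graph, so $\sum_v c_v=E=F+V-2=2^N+C-2$. In your glued configuration (before the new curve) we thus have $V=2C$ and $E=2(2^N+C-2)$, and if the $(N+1)$-st curve meets the old curves only at $t$ existing crossing points --- the only way to keep the total at $2C$ --- then $E$ grows by $t$ and the region count is $F=2^{N+1}-2+t$, forcing $t=2$ for a Venn diagram. Since the new curve must cross each of the $N$ old curves at least twice, both of these two crossings must involve all $N$ curves, with the cyclic order around each splitting into two halves containing every curve once. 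That is precisely the \emph{colorful face} hypothesis of the paper's Lemma~\ref{lem:doubling}, which your lemma omits --- and it cannot be omitted: in a simple $N$-Venn diagram with $N\ge 3$ every crossing involves only two curves, so no such doubling exists there.

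The paper therefore performs the doubling in the dual: given a face of length $2N$ containing antipodal vertices $x$ and $\overline{x}$, taken as the outer face, it places two copies of $Q(D)$ side by side, adds the two edges $\{x,x\cup\{N+1\}\}$ and $\{\overline{x},\overline{x}\cup\{N+1\}\}$, checks the three dual-graph conditions from Section~\ref{sec:dual}, and --- crucially --- verifies that the new outer face is again colorful, which is what licenses iterating $m$ times; finally it observes that the diagrams of Theorem~\ref{thm:min} do have a colorful face, namely the outer cycle $C(x_1)$ with flip sequence $(1,\ldots,n,1,\ldots,n)$. Your write-up is missing all three of these ingredients: the colorful-face hypothesis, its preservation under doubling (without which your induction on $m$ stalls after one step), and its verification for the base diagrams. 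Note also that your strip picture tacitly assumes every curve reaches the strip boundary, i.e., appears on the outer boundary of the diagram, which an arbitrary Venn diagram need not satisfy; and a repaired routing that crosses each gluing arc once would yield at least $2C+2N$ crossings (and, by the count above, the wrong number of regions, requiring further surgery), so at best it could recover the $(2+o(1))L_{n+m}$ asymptotics, not the exact crossing numbers $40\cdot 2^m$ and $\bigl(1+\frac{33}{8n}-\frac{2}{2^{n/2}}-\frac{2n}{2^n}\bigr)\frac{2^{n+m}}{n}$ claimed in the theorem.
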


Figure~\ref{fig:approx} shows the approximation ratio guaranteed by Theorem~\ref{thm:min-all} for all $8\leq n+m\leq 255$.

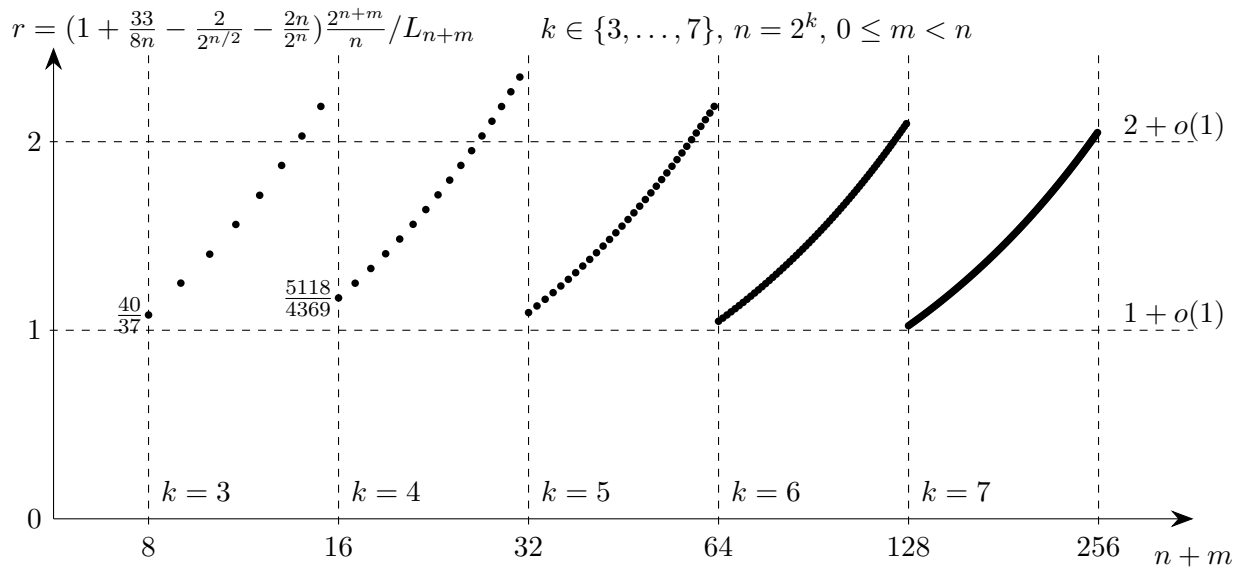
\begin{figure}[h!]
\begin{tikzpicture}[x=25mm,y=25mm]
  \draw[-{Stealth[length=3mm]}] (2.5,0) -- (8.5,0);
  \draw[-{Stealth[length=3mm]}] (2.5,0) -- (2.5,2.5);
  \foreach \x/\xp in {3/8,4/16,5/32,6/64,7/128}
  {\draw [dashed] (\x,-0.03) -- (\x,2.5);
   \node at (\x,-0.15) {$\xp$};
   \node at (\x+0.25,0.15) {$k=\x$};}
  \draw [dashed] (8,-0.03) -- (8,2.5);
  \node at (8,-0.15) {$256$};
  \draw [dashed] (2.48,1) -- (8.5,1);
  \draw [dashed] (2.48,2) -- (8.5,2);
  \foreach \y in {0,1,2}
  {\node at (2.4,\y) {$\y$};}
  \node at (3.5,2.6) {$r=(1+\frac{33}{8n}-\frac{2}{2^{n/2}}-\frac{2n}{2^n})\frac{2^{n+m}}{n}/L_{n+m}$};
  \node at (6.2,2.6) {$k\in\{3,\ldots,7\}$, $n=2^k$, $0\leq m<n$};
  \node at (8.5,-0.2) {$n+m$};
  \node at (2.9,1.08) {$\frac{40}{37}$};
  \node at (3.84,1.17) {$\frac{5118}{4369}$};
  \node at (8.4,1.08) {$1+o(1)$};
  \node at (8.4,2.08) {$2+o(1)$};
  \foreach \x/\y in {3./1.08108, 3.16993/1.25, 3.32193/1.40351, 3.45943/1.56098, 3.58496/1.71582, 3.70044/1.87408, 3.80735/2.03013, 3.90689/2.1871, 4./1.17144, 4.08746/1.24951, 4.16993/1.32754, 4.24793/1.40571, 4.32193/1.48377, 4.39232/1.56188, 4.45943/1.63998, 4.52356/1.71808, 4.58496/1.79617, 4.64386/1.87427, 4.70044/1.95236, 4.75489/2.03046, 4.80735/2.10855, 4.85798/2.18665, 4.90689/2.26474, 4.9542/2.34283, 5./1.0936, 5.04439/1.12888, 5.08746/1.16415, 5.12928/1.19943, 5.16993/1.23471, 5.20945/1.26999, 5.24793/1.30526, 5.2854/1.34054, 5.32193/1.37582, 5.35755/1.41109, 5.39232/1.44637, 5.42626/1.48165, 5.45943/1.51693, 5.49185/1.5522, 5.52356/1.58748, 5.55459/1.62276, 5.58496/1.65804, 5.61471/1.69331, 5.64386/1.72859, 5.67243/1.76387, 5.70044/1.79915, 5.72792/1.83442, 5.75489/1.8697, 5.78136/1.90498, 5.80735/1.94026, 5.83289/1.97553, 5.85798/2.01081, 5.88264/2.04609, 5.90689/2.08136, 5.93074/2.11664, 5.9542/2.15192, 5.97728/2.1872, 6./1.04782, 6.02237/1.06445, 6.04439/1.08109, 6.06609/1.09772, 6.08746/1.11435, 6.10852/1.13098, 6.12928/1.14761, 6.14975/1.16425, 6.16993/1.18088, 6.18982/1.19751, 6.20945/1.21414, 6.22882/1.23077, 6.24793/1.24741, 6.26679/1.26404, 6.2854/1.28067, 6.30378/1.2973, 6.32193/1.31393, 6.33985/1.33057, 6.35755/1.3472, 6.37504/1.36383, 6.39232/1.38046, 6.40939/1.39709, 6.42626/1.41373, 6.44294/1.43036, 6.45943/1.44699, 6.47573/1.46362, 6.49185/1.48026, 6.50779/1.49689, 6.52356/1.51352, 6.53916/1.53015, 6.55459/1.54678, 6.56986/1.56342, 6.58496/1.58005, 6.59991/1.59668, 6.61471/1.61331, 6.62936/1.62994, 6.64386/1.64658, 6.65821/1.66321, 6.67243/1.67984, 6.6865/1.69647, 6.70044/1.7131, 6.71425/1.72974, 6.72792/1.74637, 6.74147/1.763, 6.75489/1.77963, 6.76818/1.79626, 6.78136/1.8129, 6.79442/1.82953, 6.80735/1.84616, 6.82018/1.86279, 6.83289/1.87943, 6.84549/1.89606, 6.85798/1.91269, 6.87036/1.92932, 6.88264/1.94595, 6.89482/1.96259, 6.90689/1.97922, 6.91886/1.99585, 6.93074/2.01248, 6.94251/2.02911, 6.9542/2.04575, 6.96578/2.06238, 6.97728/2.07901, 6.98868/2.09564, 7./1.02416, 7.01123/1.03223, 7.02237/1.04029, 7.03342/1.04836, 7.04439/1.05642, 7.05528/1.06448, 7.06609/1.07255, 7.07682/1.08061, 7.08746/1.08868, 7.09803/1.09674, 7.10852/1.1048, 7.11894/1.11287, 7.12928/1.12093, 7.13955/1.129, 7.14975/1.13706, 7.15987/1.14513, 7.16993/1.15319, 7.17991/1.16125, 7.18982/1.16932, 7.19967/1.17738, 7.20945/1.18545, 7.21917/1.19351, 7.22882/1.20158, 7.2384/1.20964, 7.24793/1.2177, 7.25739/1.22577, 7.26679/1.23383, 7.27612/1.2419, 7.2854/1.24996, 7.29462/1.25803, 7.30378/1.26609, 7.31288/1.27415, 7.32193/1.28222, 7.33092/1.29028, 7.33985/1.29835, 7.34873/1.30641, 7.35755/1.31448, 7.36632/1.32254, 7.37504/1.3306, 7.3837/1.33867, 7.39232/1.34673, 7.40088/1.3548, 7.40939/1.36286, 7.41785/1.37093, 7.42626/1.37899, 7.43463/1.38705, 7.44294/1.39512, 7.45121/1.40318, 7.45943/1.41125, 7.46761/1.41931, 7.47573/1.42738, 7.48382/1.43544, 7.49185/1.4435, 7.49985/1.45157, 7.50779/1.45963, 7.5157/1.4677, 7.52356/1.47576, 7.53138/1.48383, 7.53916/1.49189, 7.54689/1.49995, 7.55459/1.50802, 7.56224/1.51608, 7.56986/1.52415, 7.57743/1.53221, 7.58496/1.54028, 7.59246/1.54834, 7.59991/1.5564, 7.60733/1.56447, 7.61471/1.57253, 7.62205/1.5806, 7.62936/1.58866, 7.63662/1.59673, 7.64386/1.60479, 7.65105/1.61285, 7.65821/1.62092, 7.66534/1.62898, 7.67243/1.63705, 7.67948/1.64511, 7.6865/1.65318, 7.69349/1.66124, 7.70044/1.6693, 7.70736/1.67737, 7.71425/1.68543, 7.7211/1.6935, 7.72792/1.70156, 7.73471/1.70963, 7.74147/1.71769, 7.74819/1.72575, 7.75489/1.73382, 7.76155/1.74188, 7.76818/1.74995, 7.77479/1.75801, 7.78136/1.76608, 7.7879/1.77414, 7.79442/1.7822, 7.8009/1.79027, 7.80735/1.79833, 7.81378/1.8064, 7.82018/1.81446, 7.82655/1.82253, 7.83289/1.83059, 7.8392/1.83865, 7.84549/1.84672, 7.85175/1.85478, 7.85798/1.86285, 7.86419/1.87091, 7.87036/1.87897, 7.87652/1.88704, 7.88264/1.8951, 7.88874/1.90317, 7.89482/1.91123, 7.90087/1.9193, 7.90689/1.92736, 7.91289/1.93542, 7.91886/1.94349, 7.92481/1.95155, 7.93074/1.95962, 7.93664/1.96768, 7.94251/1.97575, 7.94837/1.98381, 7.9542/1.99187, 7.96/1.99994, 7.96578/2.008, 7.97154/2.01607, 7.97728/2.02413, 7.98299/2.0322, 7.98868/2.04026, 7.99435/2.04832}
  { \draw (\x,\y) node[circle,inner sep=1pt,fill=black] {}; }
\end{tikzpicture}
\caption{Log-linear plot of the ratio~$r$ of the number of crossings of the Venn diagrams obtained from Theorem~\ref{thm:min-all} and the lower bound.}
\label{fig:approx}
\end{figure}

In particular, Theorem~\ref{thm:min-all} gives $n$-Venn diagrams with the smallest known number of crossings for all $n\ge 8$; see Table~\ref{tab:values}.

\begin{table}[h!]
\caption{The number of crossings in the Venn diagrams obtained from Theorem~\ref{thm:min-all} versus the lower bound, and the minimum numbers for monotone diagrams.}
\label{tab:values}
\setlength{\tabcolsep}{4pt}
\makebox[0cm]{ 
\begin{tabular}{r|cccccccccccccccc}
$n$ & 1 & 2 & 3 & 4 & 5 & 6 & 7 & 8 & 9 & 10 & 11 & 12 & 13 & 14 & 15 & 16 \\ \hline
lower bound $L_n=\left\lceil\frac{2^n-2}{n-1}\right\rceil$ \cite{MR1649092} & 0 & 2 & 3 & 5 & 8 & 13 & 21 & 37 & 64 & 114 & 205 & 373 & 683 & 1261 & 2341 & 4369 \\
Figure~\ref{fig:dia2} & 0 & 2 & 3 & 5 & 8 & 13 & 21 & & & & & & & & & \\
Theorem~\ref{thm:min-all} & & & & & & & & 40 & 80 & 160 & 320 & 640 & 1280 & 2560 & 5120 & 5118 \\
monotone $\binom{n}{\lfloor n/2 \rfloor}$ \cite{MR1649092} & 0 & 2 & 3 & 6 & 10 & 20 & 35 & 70 & 126 & 252 & 462 & 924 & 1716 & 3432 & 6435 & 12870 \\
\end{tabular}
}
\end{table}

\subsection{Duals of Venn diagrams}
\label{sec:dual}

Venn diagrams are conveniently studied by considering their dual graph; see Figure~\ref{fig:dual}.
Specifically, we consider the \defi{$n$-dimensional hypercube~$Q_n$}, or \defi{$n$-cube} for short, the graph formed by all subsets of~$[n]\ass\{1,2,\ldots,n\}$, with an edge between any two sets that differ in a single element.
For an edge $e=\{x,x\cup\{i\}\}$ of $Q_n$, we refer to $i$ as the \defi{direction} of the edge~$e$.

The dual graph~$Q(D)$ of an $n$-Venn diagram~$D$ satisfies the following properties:
\begin{enumerate}[label=\protect\circled{\arabic*},leftmargin=8mm]
\item It is a spanning subgraph of~$Q_n$, i.e., all $2^n$ vertices are present.
Specifically, the vertex in~$Q(D)$ corresponding to a region of the diagram~$D$ is the set of all indices of curves that contain this region in their interior.
\item It is a plane graph, i.e., it is drawn in the plane without edge crossings, such that each face has even length~$2\ell$ for some integer~$2\leq \ell\leq n$ and contains exactly two edges of $\ell$ distinct directions.
These $\ell$ directions correspond to the $\ell$ curves intersecting in the crossing corresponding to the face.
\item For every~$j\in[n]$, the two subgraphs of~$Q(D)$ induced by all vertices~$x$ with $j\in x$, and with $j\notin x$, respectively, are connected.
This corresponds to the $j$th curve being simple.
\end{enumerate}
Conversely, the dual of any subgraph of~$Q_n$ satisfying these properties is an $n$-Venn diagram.

\begin{figure}
\centerline{
\includegraphics[page=3]{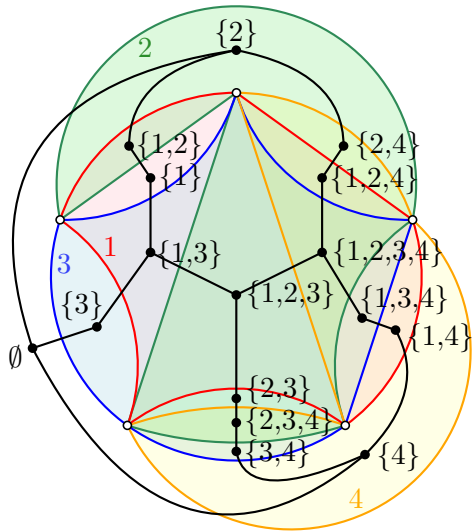}
}
\caption{The dual graph of the 4-Venn diagram from Figure~\ref{fig:dia2}~(b) is a spanning subgraph of the 4-cube.}
\label{fig:dual}
\end{figure}

Clearly, the number of crossings of the diagram~$D$ equals the number of faces of the dual graph~$Q(D)$.
If $D$ is a minimum $n$-Venn diagram, then almost all crossings involve all $n$ curves, i.e., in the dual graph~$Q(D)$, almost all faces have length~$2n$.

\subsection{Proof ideas}
\label{sec:ideas}

Our proofs of Theorems~\ref{thm:min} and~\ref{thm:min-all} are constructive, and we proceed to give an informal sketch of the main steps of these constructions.

Using the characterization presented in the previous section, we build $n$-Venn diagrams by constructing planar spanning subgraphs of the hypercube~$Q_n$ satisfying conditions~\circled{1}--\circled{3}, with the goal of minimizing the number of faces.

For $n=2^k$ we start with a partition of~$Q_n$ into isometric cycles of length~$2n$ that is derived from a partition of~$Q_{n-1}$ into isometric paths found by Ramras~\cite{MR1152451}.
These cycles are translates of a single isometric cycle of length~$2n$ by a certain linear subspace of $\mathbb{Z}_2^n$ of dimension $d\ass 2^k-k-1$.
For our construction we choose a slightly different basis of this space, denoted by~$C_k$, than originally used by Ramras.

In the first step of the construction, we embed the isometric cycles of the partition into the plane concentrically in an order that is specified by a  certain Hamiltonian path~$R$ of~$Q_d$ (see Figure~\ref{fig:v8-dual}).
Each vertex of~$Q_d$ represents coefficients of a linear combination of the basis~$C_k$, so consecutive concentric cycles in this order differ only by one element of the basis~$C_k$.
This guarantees that there are edges between consecutive cycles that create mainly large faces, but also some 6-faces.
These edges between pairs of consecutive cycles are added in the second step.

In the last step, some cycle edges shared by 6-faces are removed to further reduce the total number of faces.
For this purpose we specify a Hamiltonian path~$R$ that contains long runs in its flip sequence.
A \defi{run} is a contiguous increasing or decreasing subsequence with increment or decrement~1, respectively.
Interestingly, our construction of such long-run Hamiltonian cycles uses the same partition into isometric cycles as the aforementioned Venn diagram construction.

For values of~$n$ that are not powers of~2, we use a straightforward doubling construction (see Lemma~\ref{lem:doubling}) to derive $n$-Venn diagrams with relatively few crossings from the ones of the next smaller power of~2 constructed as described before.
This is why the number of crossings increasingly deviates from the lower bound~$L_n$ the farther the distance from the next smaller power of~2 gets, until the next larger power of~2 resets the process, which explains the behavior seen in Figure~\ref{fig:approx}.

\section{Isometric partitions of hypercubes}

\subsection{Preliminaries}

Recall that $[n]=\{1,2,\ldots,n\}$.
We also define $2^{[n]}\ass\{x\mid x\seq [n]\}$.
For $x\seq [n]$ we write $\ol{x}\ass[n]\setminus x$ for the complement of~$x$ w.r.t.\ the ground set~$[n]$.
Given a vertex~$x$ of~$Q_n$, we refer to $\ol{x}$ as the \defi{antipodal} vertex of~$x$.
For $x\seq [n]$ and an integer~$k\geq 1$ we define $k\cdot x\ass\{ki\mid i\in x\}$.
All these operations thread in the natural way over sets and sequences.
For example, for a set~$X\seq 2^{[n]}$ and an integer~$k\geq 1$ we have $k\cdot X=\{k\cdot x\mid x\in X\}$.

For $x,y\seq [n]$ we define the symmetric difference as $x\oplus y\ass(x\setminus y)\cup (y\setminus x)$.
For any set $X\seq 2^{[n]}$ we define the \defi{span}~of~$X$ by $\langle X\rangle\ass\{x_1\oplus x_2\oplus \cdots\oplus x_t\mid t\in\{0,\ldots,n\}\text{ and } x_i\in X\text{ for all }i\in[t]\}$.

We sometimes refer to an edge~$e=\{x,x\oplus\{i\}\}$ of direction~$i$ in~$Q_n$ as an \defi{$i$-edge}.
The \defi{flip sequence} of a path or cycle~$P$ in~$Q_n$, denoted by~$\sigma(P)$, is the sequence of directions of edges along~$P$.
We write $|\sigma(P)|$ for the length of this sequence.
For example, the flip sequence of the path~$P=(\{1,2\},\{1,2,3\},\{1,3\},\{3\},\{2,3\})$ in~$Q_3$ is $\sigma(P)=(3,2,1,2)$ and we have $|\sigma(P)|=4$.

A subgraph~$H$ of a graph~$G$ is \defi{isometric} if it preserves distances, i.e., $d_H(u,v)=d_G(u,v)$ for any two vertices $u,v \in V(H)$.
In particular, a path in the hypercube~$Q_n$ is isometric if and only if it contains no two edges of the same direction.
Similarly, a cycle in~$Q_n$ is isometric if and only the edges of the same direction come in pairs that lie oppositely on the cycle.
If the isometric cycle has length~$2n$, then it is the union of an isometric path and its corresponding antipodal path.

\subsection{Partition into isometric paths}
\label{sec:part}

Ramras~\cite{MR1152451} described a partition of the hypercube~$Q_{n-1}$, for $n=2^k$ and $k\geq 1$, into isometric paths.
For this we consider the sequence of sets~$(B_k)_{k\geq 1}$ defined recursively by
\begin{equation}
\label{eq:Bk}
\begin{split}
B_1&\ass\emptyset, \text{ and } \\
B_k&\ass B_{k-1}\cup\{\{1,2^{k-1}+1\},\{2,2^{k-1}+2\},\ldots,\{2^{k-1}-1,2^k-1\}\} \text{ for } k\geq 2.
\end{split}
\end{equation}
According to this definition, the first few sets are
\[
\begin{split}
B_1&=\emptyset, \\
B_2&=\{\{1,3\}\}, \\
B_3&=\{\{1,3\},\{1,5\},\{2,6\},\{3,7\}\}, \\
B_4&=\{\{1,3\},\{1,5\},\{2,6\},\{3,7\},\{1,9\},\{2,10\},\{3,11\},\{4,12\},\{5,13\},\{6,14\},\{7,15\}\}.
\end{split}
\]
The sets in~$B_k$, viewed as binary (characteristic) vectors, are linearly independent and hence they form the basis of a linear subspace $\langle B_k \rangle$ of dimension $|B_k|=2^k-k-1$ of the space $\mathbb{Z}_2^n$.
For any $x\in \langle B_k\rangle$, Ramras~\cite{MR1152451} defines a path $P(x)$ of length~$n-1$ in~$Q_{n-1}$ by
\begin{equation}
\label{eq:Px}
P(x)\ass (x,x\oplus \{1\},x\oplus \{1,2\},\ldots,x\oplus\{1,2,\ldots,n-1\}).
\end{equation}
The flip sequence along this path is $\sigma(P)=(1,2,\ldots,n-1)$, and the end vertices of~$P(x)$ are antipodal in~$Q_{n-1}$, i.e., the last vertex $x\oplus\{1,2,\ldots,n-1\}$ is the complement of the first vertex~$x$ w.r.t.\ the ground set~$[n-1]$.
The following theorem is illustrated on the left hand side of Figure~\ref{fig:iso-part}.

\begin{theorem}[{\cite[Thm.~2.5]{MR1152451}}]
\label{thm:iso-part-paths}
For any $k\geq 1$ and $n\ass 2^k$, the isometric paths $P(x)$ of length~$n-1$ defined in~\eqref{eq:Px} for all $x\in\langle B_k\rangle$ form a partition of the vertex set of~$Q_{n-1}$.
\end{theorem}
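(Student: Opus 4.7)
The plan is to reduce the statement, via a counting argument, to pairwise vertex-disjointness of the paths, and then to reduce disjointness to a purely linear-algebraic claim about $\langle B_k\rangle$.

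First, each $P(x)$ has flip sequence $(1,2,\ldots,n-1)$, which uses every direction in $[n-1]$ exactly once, so $P(x)$ is an isometric path on $n=2^k$ distinct vertices. Since $|\langle B_k\rangle|=2^{|B_k|}=2^{n-k-1}$, the total vertex count with multiplicity is $n\cdot 2^{n-k-1}=2^{n-1}=|V(Q_{n-1})|$, so covering $V(Q_{n-1})$ follows once we show pairwise disjointness. Suppose $P(x)$ and $P(x')$ meet at a vertex; then $x\oplus\{1,\ldots,j\}=x'\oplus\{1,\ldots,j'\}$ for some $0\le j\le j'\le n-1$, which rearranges to $x\oplus x'=\{j+1,\ldots,j'\}$. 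Since $\langle B_k\rangle$ is a linear subspace containing $x$ and $x'$, disjointness therefore reduces to the following \emph{key claim}: the only ``consecutive interval'' $\{a,a+1,\ldots,b\}\subseteq[n-1]$ lying in $\langle B_k\rangle$ is the empty one.

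For the key claim, I would introduce the linear functional $\Phi\colon\mathbb{Z}_2^{n-1}\to\mathbb{Z}_2^k$ defined by $\Phi(y)\ass\sum_{i\in y}e_{\nu_2(i)+1}$, where $\nu_2(i)$ is the 2-adic valuation of $i$ and $e_1,\ldots,e_k$ is the standard basis. Since every generator $\{i,\,2^{j-1}+i\}\in B_k$ satisfies $i<2^{j-1}$ and hence $\nu_2(i)=\nu_2(i+2^{j-1})$, the functional $\Phi$ annihilates $B_k$. As $\Phi(\{2^j\})=e_{j+1}$ for $j=0,\ldots,k-1$ shows $\Phi$ is surjective, we get $\dim\ker\Phi=n-1-k=\dim\langle B_k\rangle$, yielding $\langle B_k\rangle=\ker\Phi$. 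Thus $y\in\langle B_k\rangle$ precisely when, for every $\nu\in\{0,\ldots,k-1\}$, the number of elements of $y$ with 2-adic valuation $\nu$ is even.

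For an interval $I=\{a,\ldots,b\}\subseteq[2^k-1]$ with $\Phi(I)=0$, I would conclude $I=\emptyset$ by induction on $k$. The last coordinate of $\Phi(I)$ counts elements of $I$ with $\nu_2=k-1$, and the only such element of $[2^k-1]$ is $2^{k-1}$; hence $2^{k-1}\notin I$, forcing $I\subseteq[2^{k-1}-1]$ or $I\subseteq\{2^{k-1}+1,\ldots,2^k-1\}$. In the first subcase $\Phi$ has vanishing $k$-th coordinate on $I$ and its remaining coordinates agree with the analogous functional for level $k-1$, so induction applies; in the second subcase translation by $-2^{k-1}$ preserves $\Phi$ (by the same $\nu_2$ observation) and maps $I$ to an interval in $[2^{k-1}-1]$, again reducing $k$. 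The base case $k=1$ is immediate since the only nonempty interval in $[1]$ is $\{1\}$ and $\Phi(\{1\})=e_1\ne 0$. The main conceptual step is guessing the invariant~$\Phi$; once motivated by the observation that $B_k$ pairs elements sharing the same 2-adic valuation, the dimension check and the bisection at~$2^{k-1}$ make the induction routine.
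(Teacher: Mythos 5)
The paper offers no proof of Theorem~\ref{thm:iso-part-paths} to compare against: it is imported verbatim from Ramras \cite{MR1152451}, so your argument has to be judged on its own terms, and it is correct. The counting step is right ($n\cdot|\langle B_k\rangle|=2^k\cdot 2^{n-k-1}=2^{n-1}$), the reduction of disjointness to the claim that $\langle B_k\rangle$ contains no nonempty interval is exactly what the form $x\oplus\{1,\ldots,j\}$ of the vertices of $P(x)$ in~\eqref{eq:Px} demands, and the kernel characterization is sound: every generator $\{i,2^{j-1}+i\}\in B_k$ has $1\le i\le 2^{j-1}-1$, so $\nu_2(i)\le j-2$ and hence $\nu_2(i+2^{j-1})=\nu_2(i)$, giving $\langle B_k\rangle\subseteq\ker\Phi$; the singletons $\{2^j\}$, $0\le j\le k-1$, make $\Phi$ surjective, so $\dim\ker\Phi=n-1-k=|B_k|$ and equality follows. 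The bisection induction also checks out, since $2^{k-1}$ is the unique element of $[2^k-1]$ with valuation $k-1$, and subtracting $2^{k-1}$ from an interval above it preserves all valuations. One point you lean on silently: both $|\langle B_k\rangle|=2^{|B_k|}$ and $\dim\langle B_k\rangle=|B_k|$ require the linear independence of $B_k$. The paper asserts this just before the theorem statement, so citing it is fair, but in a self-contained writeup you should add the one-line argument (the maxima $2^{j-1}+i$ of the generators are pairwise distinct, each lying strictly between consecutive powers of~$2$; in any purported dependency the generator with the largest maximum cannot be cancelled, since that maximum could only recur as the smaller element of a generator with an even larger maximum). A bonus your invariant buys that the paper's route does not: the generators of $C_k$ from~\eqref{eq:OkCk} are pairs $\{2^i(2j-1),2^i(2j+1)\}$ whose two elements have equal valuation~$i$, so $\Phi$ annihilates $C_k$ as well; combined with the independence of $C_k$ (via Lemma~\ref{lem:Cdiff} and the same largest-maximum argument), this yields $\langle C_k\rangle=\ker\Phi=\langle B_k\rangle$, i.e., an alternative proof of Lemma~\ref{lem:BkCk} that replaces the inductive unrolling of the recursion by a single closed-form membership test for $\langle B_k\rangle$ (each $2$-adic valuation class meets $y$ in an even number of elements).
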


\begin{figure}[h!]
\centerline{
\includegraphics[page=1]{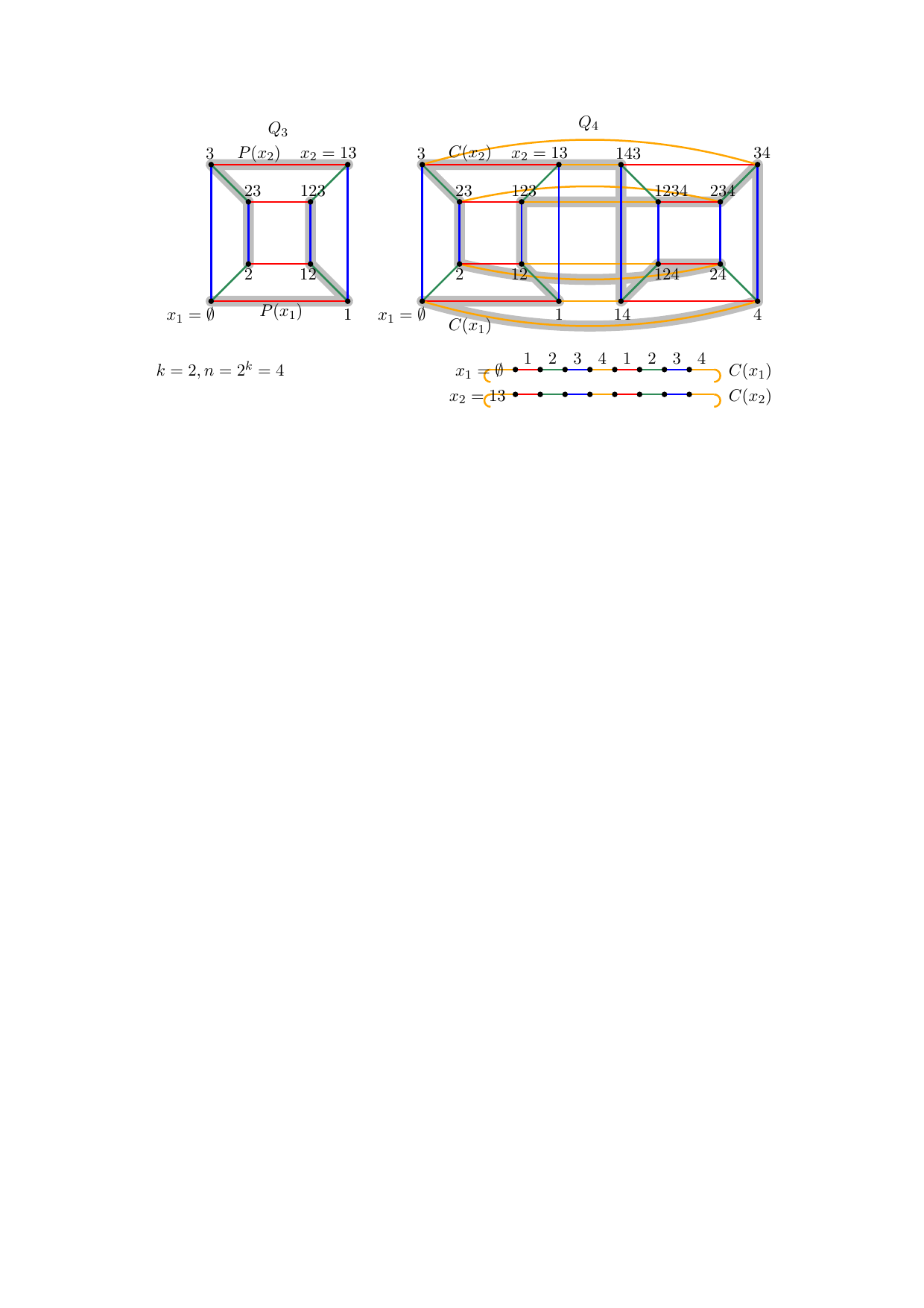}
}
\caption{Illustration of Theorem~\ref{thm:iso-part-paths} (left) and Corollary~\ref{cor:iso-part-cycles} (right) for the case $k=2$.
The bottom right shows a schematic illustration of the cycles showing only the starting vertices on the left and the flip sequences along each cycle.}
\label{fig:iso-part}
\end{figure}

\subsection{Finding another basis for $\langle B_k\rangle$}

For our purposes, we choose a different basis of the subspace $\langle B_k\rangle$, namely one that contains a larger number of $2$-sets $\{a,b\}$ with $|b-a|=2$.
We define
\begin{equation}
\label{eq:OkCk}
\begin{split}
C_1&\ass O_1\ass \emptyset,\\
O_k&\ass \{\{1,3\},\{3,5\},\ldots,\{2^k-3,2^k-1\}\} \text{ for $k\geq 2$, and} \\
C_k& \ass O_k\cup 2C_{k-1} \text{ for } k\geq 2.
\end{split}
\end{equation}
According to this definition, the first few sets are
\[
\begin{split}
O_1&=\emptyset, \\
O_2&=\{\{1,3\}\}, \\
O_3&=\{\{1,3\},\{3,5\},\{5,7\}\}, \\
O_4&=\{\{1,3\},\{3,5\},\{5,7\},\{7,9\},\{9,11\},\{11,13\},\{13,15\}\},
\end{split}
\]
and
\[
\label{eq:C14}
\begin{split}
C_1&=\emptyset, \\
C_2&=\{\{1,3\}\}, \\
C_3&=\{\{1,3\},\{3,5\},\{5,7\},\{2,6\}\}, \\
C_4&=\{\{1,3\},\{3,5\},\{5,7\},\{7,9\},\{9,11\},\{11,13\},\{13,15\},\{2,6\},\{6,10\},\{10,14\},\{4,12\}\}.
\end{split}
\]

Note that $|b-a|=2$ for every $\{a,b\}\in O_k$ and furthermore, $|O_k|=2^{k-1}-1$ and $|C_k|=2^k-k-1$.

\begin{lemma}
\label{lem:Cdiff}
For any two distinct sets $\{a,b\},\{a',b'\}\in C_k$ with $a<b$ and $a'<b'$ we have $a \ne a'$ and $b\ne b'$.
\end{lemma}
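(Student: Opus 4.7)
The plan is to prove the statement by induction on~$k$, using the recursive definition $C_k=O_k\cup 2C_{k-1}$ in~\eqref{eq:OkCk} together with a simple parity argument that separates the two parts of the union.

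The base cases $k=1,2$ are immediate: $C_1=\emptyset$ is vacuously fine, and $C_2=\{\{1,3\}\}$ contains only a single pair. For the inductive step, assume that the lemma holds for $C_{k-1}$. First I would handle $O_k$ directly from its explicit form: writing the elements as $\{2i-1,2i+1\}$ for $i=1,\ldots,2^{k-1}-1$, the smaller elements are $1,3,5,\ldots,2^k-3$ and the larger elements are $3,5,7,\ldots,2^k-1$, so both lists consist of pairwise distinct odd numbers. Next, the inductive hypothesis applied to $C_{k-1}$ gives the property for the pairs in $C_{k-1}$, and multiplying every pair entrywise by~$2$ preserves the relative order as well as the pairwise distinctness of smaller resp.\ larger coordinates, so the property also holds inside $2C_{k-1}$.

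It remains to rule out collisions across the two parts of the union. Here the key observation is the parity split: every element that appears in a pair of $O_k$ is odd, while every element that appears in a pair of $2C_{k-1}$ is even. Hence no pair in $O_k$ can share either its smaller or its larger element with a pair in $2C_{k-1}$, and the property lifts from the two parts to their union $C_k$.

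There is no real obstacle in the argument; the only thing one has to be careful about is to verify that the two parts are genuinely disjoint in the strong sense that no element of $[2^k-1]$ occurs both in an $O_k$-pair and in a $2C_{k-1}$-pair, which is exactly the parity statement above. An implicit by-product of the proof worth recording for later use is that the $k-1$ even elements appearing in $2C_{k-1}$ lie in $\{2,4,\ldots,2^k-2\}$, so that $C_k$ is indeed a family of $2$-subsets of $[2^k-1]$, consistent with its role as a basis of $\langle B_k\rangle\subseteq\mathbb{Z}_2^{2^k-1}$.
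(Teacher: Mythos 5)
Your proof is correct and takes essentially the same route as the paper's one-line argument: the property holds trivially for~$O_k$, is preserved under multiplication by~$2$ (your induction makes this explicit), and the odd/even parity split separates $O_k$ from $2C_{k-1}$. One small inaccuracy in your closing aside, which is not needed for the lemma: the number of distinct even elements appearing in $2C_{k-1}$ is $2^{k-1}-2$, not $k-1$ (the two agree only at $k=3$; e.g.\ $2C_3=\{\{2,6\},\{6,10\},\{10,14\},\{4,12\}\}$ has six distinct elements), although they do lie in $\{2,4,\ldots,2^k-2\}$ as you state.
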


\begin{proof}
This follows from the definition~\eqref{eq:OkCk}, noting that the property trivially holds for~$O_k$, that it is preserved under multiplication by~2, and that all elements of~$O_k$ contain only odd numbers, whereas all elements of~$2C_{k-1}$ contain only even numbers.
\end{proof}

\begin{lemma}
\label{lem:BkCk}
The sets~$B_k$ and~$C_k$ defined in~\eqref{eq:Bk} and~\eqref{eq:OkCk}, respectively, satisfy $\langle B_k\rangle=\langle C_k\rangle$ for any $k\geq 1$.
\end{lemma}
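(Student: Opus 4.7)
The plan is to prove $\langle B_k\rangle=\langle C_k\rangle$ by induction on $k$, with the trivial base case $k=1$. Before the step I would first check that $C_k$ is linearly independent: in any vanishing symmetric-difference sum of distinct pairs of $C_k$, let $m$ be the smallest integer occurring in any of these pairs. Since $m$ is the overall minimum, it cannot be a larger coordinate $b_i$ of any pair $\{a_i,b_i\}$ with $a_i<b_i$ (that would force $a_i<m$), so $m$ is the smaller coordinate of some pair, and by Lemma~\ref{lem:Cdiff} no other pair has $m$ as its smaller coordinate either. Hence $m$ appears in exactly one pair and survives in the symmetric difference, a contradiction. Together with the linear independence of $B_k$ stated in the text and $|B_k|=|C_k|=2^k-k-1$, this reduces the problem to the single inclusion $C_k\subseteq\langle B_k\rangle$, which I would attack via the decomposition $C_k=O_k\cup 2C_{k-1}$.

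For $O_k\subseteq\langle B_k\rangle$, I would view $B_k$ as the edge set of a graph $G_k$ on vertex set $[2^k-1]$ and invoke the standard observation that a 2-set $\{a,b\}$ lies in $\langle B_k\rangle$ precisely when $a$ and $b$ belong to the same connected component of $G_k$ (the symmetric difference of the edges of an $a$-$b$ path is exactly $\{a,b\}$). A simple inner induction on $k$ then shows that all odd vertices of $G_k$ lie in a single component: the hypothesis connects all odd numbers in $[2^{k-1}-1]$ via $G_{k-1}$, and for each odd $i\in[2^{k-1}-1]$ the edge $\{i,2^{k-1}+i\}\in B_k\setminus B_{k-1}$ links $i$ to $2^{k-1}+i$, which is again odd because $2^{k-1}$ is even for $k\geq 2$. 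Consequently every $\{2i-1,2i+1\}\in O_k$ lies in $\langle B_k\rangle$.

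For $2C_{k-1}\subseteq\langle B_k\rangle$, the outer inductive hypothesis yields $\langle C_{k-1}\rangle=\langle B_{k-1}\rangle$, and since multiplication by $2$ is an injective $\mathbb{Z}_2$-linear map this gives $\langle 2C_{k-1}\rangle=\langle 2B_{k-1}\rangle$, so it suffices to establish the auxiliary inclusion $2B_{k-1}\subseteq\langle B_k\rangle$. I would prove this by a nested induction on $k$: setting $A_{k-1}:=B_{k-1}\setminus B_{k-2}=\{\{i,2^{k-2}+i\}:1\leq i\leq 2^{k-2}-1\}$, the doubled new layer $2A_{k-1}=\{\{2i,2^{k-1}+2i\}:1\leq i\leq 2^{k-2}-1\}$ already sits inside $B_k\setminus B_{k-1}$ (since $2i\leq 2^{k-1}-2$), while $2B_{k-2}\subseteq\langle B_{k-1}\rangle\subseteq\langle B_k\rangle$ by the nested inductive hypothesis. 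I expect the main obstacle to be precisely this mismatch between the two recursions defining $B_k$ (which grows by a shift by $2^{k-1}$) and $C_k$ (which grows by doubling indices and adjoining the odd chain $O_k$); the auxiliary nested induction is the bridge that makes doubling visible inside the shift-based recursion of $B_k$, so that the outer induction can close.
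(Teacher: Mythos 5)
Your argument is correct, but it runs in the opposite direction from the paper's and is structured quite differently. The paper proves the single inclusion $B_k\subseteq\langle C_k\rangle$ by one induction on~$k$: for a new element $\{a,b\}\in B_k\setminus B_{k-1}$ it factors out all powers of~$2$ from~$a$, writing $\{a,b\}=2^i\{j,j+2^{k-1-i}\}$ with $j$ odd, observes that $\{j,j+2^{k-1-i}\}$ is a telescoping sum of consecutive odd pairs and hence lies in $\langle O_{k-i}\rangle$, and concludes via the unrolled decomposition $C_k=\bigcup_{i=0}^{k-1}2^iO_{k-i}$; the dimension count then needs only the linear independence of~$B_k$, which the paper already has. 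You instead prove $C_k\subseteq\langle B_k\rangle$, which forces three extra ingredients, all of which check out: (i) the linear independence of~$C_k$, which your minimal-element argument via Lemma~\ref{lem:Cdiff} establishes correctly (and note that, given this, the independence of $B_k$ is actually redundant in your dimension count: $\dim\langle C_k\rangle=|C_k|=|B_k|\geq\dim\langle B_k\rangle$ together with $\langle C_k\rangle\subseteq\langle B_k\rangle$ already forces equality); (ii) the connectivity of the odd vertices in the graph with edge set~$B_k$, whose inner induction is sound since $2^{k-1}+i$ is odd exactly when $i$ is; and (iii) the auxiliary induction $2B_{k-1}\subseteq\langle B_k\rangle$, where the key containment $2A_{k-1}\subseteq B_k\setminus B_{k-1}$ holds because $2i\leq 2^{k-1}-2$. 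What each route buys: the paper's factoring trick makes each $B$-element land in the span of a single scaled layer $2^iO_{k-i}$, giving a visibly shorter proof; your route is longer but yields the independence of~$C_k$ as an explicit byproduct (in the paper it follows only a posteriori from the dimension count), and your path-telescoping view of $\langle B_k\rangle$ is the same mechanism the paper exploits inside $\langle O_{k-i}\rangle$, just deployed on the other basis.
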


\begin{proof}
Unrolling the recursive definition~\eqref{eq:OkCk}, we obtain
\begin{equation}
\label{eq:CkOk-union}
 C_k=O_k\cup 2O_{k-1}\cup 4O_{k-2}\cup\cdots\cup 2^{k-1}O_1=\bigcup_{i=0}^{k-1} 2^i O_{k-i}.
\end{equation}
From~\eqref{eq:OkCk} we directly obtain $O_{k-1}\seq O_k$, and combining this with~\eqref{eq:CkOk-union} proves that $C_{k-1}\seq C_k$.

We first show by induction on~$k$ that if $\{a,b\}\in B_k$ with $a<b$, then $\{a,b\}\in \langle C_k \rangle$.
The induction basis $k=1$ is trivial.
For the induction step we assume that $k\geq 2$.
There are two possible cases, namely $\{a,b\}\in B_{k-1}$ and $\{a,b\}\in B_k\setminus B_{k-1}$.
If $\{a,b\}\in B_{k-1}$ then we have $\{a,b\}\in \langle C_{k-1}\rangle$ by induction, and since $C_{k-1}\subseteq C_k$ as argued before, we indeed obtain that $\{a,b\}\in\langle C_k\rangle$.
On the other hand, if $\{a,b\}\in B_k\setminus B_{k-1}$ then factoring out all 2s from~$a$ gives $\{a,b\}=2^i\{j,j+2^{k-1-i}\}$ for some odd $j\geq 1$ and $0\le i<k-1$ such that $j+2^{k-1-i}<2^{k-i}$.
Since clearly $\{j,j+2^{k-1-i}\} \in \langle O_{k-i}\rangle$, it follows from \eqref{eq:CkOk-union} that $\{a,b\}\in \langle C_k \rangle$.

We have shown that $\langle B_k\rangle \seq \langle C_k\rangle$.
Since the sets in~$B_k$ are linearly independent and $|B_k|=|C_k|$, it follows that $\langle B_k\rangle =\langle C_k\rangle$.
\end{proof}

\subsection{Partition into isometric cycles}

From Theorem~\ref{thm:iso-part-paths} it follows that
\begin{equation}
\label{eq:Cx}
C(x)\ass (P(x),\ol{P(x)}\cup\{n\})
\end{equation}
is an isometric cycle of length~$2n$ in~$Q_n$, $n=2^k$, with flip sequence $\sigma(C(x))=(1,2,\ldots,n,1,2,\ldots,n)$, and furthermore, the cycles $C(x)$ for all $x\in\langle B_k\rangle$ form a partition of the vertex set of~$Q_n$.
Combining this with Lemma~\ref{lem:BkCk} we obtain the following isometric partition into cycles, which is the starting point of our constructions later on; see the right hand side of Figure~\ref{fig:iso-part}.

\begin{corollary}
\label{cor:iso-part-cycles}
For any $k\geq 1$ and $n\ass 2^k$, the isometric cycles $C(x)$ of length~$2n$ defined in~\eqref{eq:Cx} for all $x\in\langle C_k\rangle$ form a partition of the vertex set of~$Q_n$.
\end{corollary}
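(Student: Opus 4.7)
The plan is to obtain Corollary~\ref{cor:iso-part-cycles} as an essentially immediate consequence of Theorem~\ref{thm:iso-part-paths} together with Lemma~\ref{lem:BkCk}, via the observation about the cycles $C(x)$ recorded just above the corollary. So the argument decomposes into (a) verifying the cycle/partition claim when the indexing set is $\langle B_k\rangle$, and (b) translating from $\langle B_k\rangle$ to $\langle C_k\rangle$ via Lemma~\ref{lem:BkCk}.

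For part~(a), I would split $V(Q_n)$ into the ``lower cube'' $V_0\ass\{v\seq[n]\mid n\notin v\}$ and the ``upper cube'' $V_1\ass\{v\seq[n]\mid n\in v\}$, joined by the $n$-edges. Theorem~\ref{thm:iso-part-paths} partitions $V_0$ into the vertex sets of the paths $P(x)$ for $x\in\langle B_k\rangle$. By construction~\eqref{eq:Cx}, each cycle $C(x)$ passes through $V_0$ along $P(x)$ and through $V_1$ along $\ol{P(x)}\cup\{n\}$, whose vertices are precisely the $Q_n$-antipodes of $V(P(x))$. Since antipodation $v\mapsto[n]\setminus v$ is a bijection $V_0\to V_1$, it transports the partition of $V_0$ to a partition of $V_1$, and the two halves together give a partition of $V(Q_n)$ by the vertex sets $V(C(x))$. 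The cycle property, the length $2n$, and the flip sequence $(1,2,\ldots,n,1,2,\ldots,n)$ all follow from $\sigma(P(x))=(1,\ldots,n-1)$ combined with the two bridging $n$-edges, using that $P(x)$ ends at the $Q_{n-1}$-antipode of its starting vertex.

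For part~(b), I would simply invoke Lemma~\ref{lem:BkCk}, which states $\langle B_k\rangle=\langle C_k\rangle$ as subsets of $\mathbb{Z}_2^{n-1}$. The families $\{C(x)\mid x\in\langle B_k\rangle\}$ and $\{C(x)\mid x\in\langle C_k\rangle\}$ therefore agree cycle for cycle, so the partition statement established in part~(a) transfers verbatim. There is no real obstacle: the substantive content is carried by Ramras's Theorem~\ref{thm:iso-part-paths} and by the change-of-basis identity of Lemma~\ref{lem:BkCk}, and Corollary~\ref{cor:iso-part-cycles} is a routine repackaging of these two ingredients together with the ``doubling'' $P(x)\mapsto C(x)$ described in~\eqref{eq:Cx}.
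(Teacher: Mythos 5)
Your proof is correct and takes essentially the same route as the paper, which obtains the corollary in exactly this way: Theorem~\ref{thm:iso-part-paths} partitions the subcube of vertices avoiding~$n$ into the paths $P(x)$, each $C(x)$ adjoins the antipodal copy of $P(x)$ via two $n$-edges (yielding the partition of $V(Q_n)$ indexed by $\langle B_k\rangle$), and Lemma~\ref{lem:BkCk} re-indexes by $\langle C_k\rangle$. You merely spell out details the paper leaves implicit, such as the antipodal bijection between the two halves of $Q_n$ and the verification of the flip sequence $(1,2,\ldots,n,1,2,\ldots,n)$.
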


The following lemma describes the edges between the cycles~$C(x)$, and it is illustrated in Figure~\ref{fig:cross-edges}.
The proof is elementary, and we omit it.

\begin{figure}[t!]
\centerline{
\includegraphics[page=2]{cross}
}
\caption{Illustration of Lemma~\ref{lem:cross-edges} and definitions of the edge sets $E(x,\{a,b\})$, $F(x,\{a,b\})$, $E_{\diagdown}(x,a)$, and $E_{\diagup}(x,a)$.}
\label{fig:cross-edges}
\end{figure}

\begin{lemma}
\label{lem:cross-edges}
Let $k\geq 2$ and $n\ass 2^k$, and let $x,y\in\langle C_k\rangle$ be such that $y=x\oplus \{a,b\}$ with $a<b$.
Then the cycles~$C(x)$ and~$C(y)$ have the following edges between them in $Q_n$:
\begin{enumerate}[label=(\roman*),topsep=0mm,itemsep=0mm,leftmargin=8mm]
\item From the start/end vertex of each of the two $a$-edges of~$C(x)$, there is a $b$-edge to the end/start vertex of the corresponding $a$-edge of~$C(y)$.
\item From the start/end vertex of each of the two $b$-edges of~$C(x)$, there is an $a$-edge to the end/start vertex of the corresponding $b$-edge of~$C(y)$.
\end{enumerate}
If $b=a+2$, then in addition the following edges are present:
\begin{enumerate}[label=(\roman*),topsep=0mm,itemsep=0mm,leftmargin=8mm]
\setcounter{enumi}{2}
\item From the start/end vertex of each of the two $a$-/$(a+2)$-edges of~$C(x)$, there is an $(a+1)$-edge to the end/start vertex of the corresponding $(a+2)$-/$a$-edge of~$C(y)$.
\end{enumerate}
\end{lemma}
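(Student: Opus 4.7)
The plan is to verify the three claims by direct computation from the explicit parametrization of $C(x)$. Labeling the vertices of $C(x)$ as $v_0,v_1,\ldots,v_{2n-1}$ along its flip sequence $(1,2,\ldots,n,1,2,\ldots,n)$ starting from $v_0=x$ yields $v_i=x\oplus\{1,\ldots,i\}$ for $0\le i\le n$ and $v_{n+i}=x\oplus\{i+1,\ldots,n\}$ for $0\le i<n$. Consequently, the two $a$-edges of $C(x)$ are $\{v_{a-1},v_a\}$ and $\{v_{n+a-1},v_{n+a}\}$, and the two $a$-edges of $C(y)$ are obtained from these by replacing $x$ with $y=x\oplus\{a,b\}$; analogously for the $b$-edges and, in case~(iii), for the $(a+2)$-edges. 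A preliminary observation I would record is that $b\le n-1$: since $\{a,b\}\in\langle C_k\rangle=\langle B_k\rangle$ by Lemma~\ref{lem:BkCk} and every element of $B_k$---hence of its span---is contained in $[n-1]$, this guarantees that $b$ lies in all the index ranges $\{a+1,\ldots,n\}$ that appear below.

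For each cross-edge claimed in~(i), the verification is a one-line XOR identity. For instance, the start vertex $v_{a-1}=x\oplus\{1,\ldots,a-1\}$ of the first $a$-edge of $C(x)$, XOR-ed with $\{b\}$, equals $x\oplus\{1,\ldots,a-1,b\}=y\oplus\{1,\ldots,a\}$, the end vertex of the first $a$-edge of $C(y)$. The remaining three cross-edges of~(i) follow the same pattern, and~(ii) is literally~(i) with the roles of $a$ and $b$ exchanged. The only bookkeeping needed is to track which single element of $\{a,b\}$ enters or leaves a symmetric difference of the form $\{1,\ldots,j\}\oplus\{a,b\}$ or $\{j,\ldots,n\}\oplus\{a,b\}$, and the bound $b\le n-1$ from the previous paragraph ensures that the ``second'' $a$- and $b$-edges of each cycle are non-degenerate.

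For part~(iii), with $b=a+2$, one similarly checks the four additional $(a+1)$-edges. In the representative case, $v_{a-1}=x\oplus\{1,\ldots,a-1\}$ and $y\oplus\{1,\ldots,a+2\}=x\oplus\{1,\ldots,a-1,a+1\}$ differ exactly in $\{a+1\}$ (using the telescoping $\{1,\ldots,a+2\}\oplus\{a,a+2\}=\{1,\ldots,a-1,a+1\}$), so there is an $(a+1)$-edge from the start vertex of the first $a$-edge of $C(x)$ to the end vertex of the first $(a+2)$-edge of $C(y)$. The remaining cross-edges in~(iii)---pairing the second $a$-edge of $C(x)$ with the second $(a+2)$-edge of $C(y)$, and the two $(a+2)$-edges of $C(x)$ with the two $a$-edges of $C(y)$---are verified by identical one-line computations. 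The argument is pure bookkeeping with no genuine obstacle; the only non-trivial ingredient is the appeal to Lemma~\ref{lem:BkCk} to secure $b\le n-1$, which is presumably why the authors omit the proof.
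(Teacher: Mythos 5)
Your proof is correct, and since the paper explicitly omits the proof of this lemma as elementary, your direct verification via the parametrization $v_i=x\oplus\{1,\ldots,i\}$ for $0\le i\le n$ and $v_{n+i}=x\oplus\{i+1,\ldots,n\}$, locating the two $a$-edges at positions $a$ and $n+a$ and checking each claimed cross-edge by a one-line symmetric-difference identity, is precisely the routine computation the authors had in mind. One small simplification: the bound $b\le n-1$ (and the implicit fact $\{a,b\}=x\oplus y\in\langle C_k\rangle$) follows directly from the definition~\eqref{eq:OkCk}, since every element of $C_k$, and hence of its span, is a subset of $[n-1]$, so the detour through $B_k$ and Lemma~\ref{lem:BkCk} is not needed.
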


For $b>a+2$ we write $E(x,\{a,b\})$ for the set of four edges shown in Figure~\ref{fig:cross-edges}~(a1).
Furthermore, for $b=a+2$ we write $E_{\diagdown}(x,a)$ and~$E_{\diagup}(x,a)$ for the two sets of three edges shown in parts~(b1) and~(b2) of the figure, respectively.
Lastly, we write $F(x,\{a,b\})$ for the 4-cycle shown in part~(a2) of the figure.

\section{Long-run Gray code construction}
\label{sec:run}

Given a path or cycle~$P$ in~$Q_n$ and integer~$\rho\in[n]$, an \defi{increasing} or \defi{decreasing $\rho$-run} in its flip sequence~$\sigma(P)$ is a contiguous subsequence $(a,a+1,a+2,\ldots,a+\ell)$ or $(a+\ell,a+\ell-1,\ldots,a+1,a)$ such that $a+\ell\leq \rho$.
In words, it is a sequence of values that are increasing or decreasing (with increment~1 or decrement~1, respectively) such that all flipped directions are at most~$\rho$.
The parameter $\ell\geq 0$, which is one less than the length of the subsequence, is called the \defi{length} of the run.

A $\rho$-run is \defi{maximal} if it cannot be extended, i.e., if it is not contained in another $\rho$-run.
Note that two maximal $\rho$-runs may overlap in at most one element, namely the last element of an increasing run and the first element of a decreasing run, or vice versa.
Furthermore, note that a maximal run of length~0 does not overlap with any other runs, and a maximal run of length~1 does not overlap with two other runs in the first and last element, as this would create a flip sequence $a+1,a,a+1,a$ or $a,a+1,a,a+1$, i.e., a 4-cycle, which is impossible.
A \defi{$\rho$-run partition} of~$\sigma(P)$ is obtained by considering all maximal $\rho$-runs in~$\sigma(P)$, and removing, from every pair of consecutive overlapping runs, the element in which they overlap from one of the two runs.
For example, the flip sequence $\sigma(P)=(3,1,2,3,2,1,2,4,3,2,4)$ has the maximal 3-runs
$
(\overbracket[0.5pt][2pt]{3},\lefteqn{\overbracket[0.5pt][2pt]{\phantom{1,2,3}}}1,2,\lefteqn{\underbracket[0.5pt][2pt]{\phantom{3,2,1}}}3,2,\lefteqn{\overbracket[0.5pt][2pt]{\phantom{1,2}}}1,2,\bcancel{4},\underbracket[0.5pt][2pt]{3,2},\bcancel{4}),
$
where increasing and decreasing runs are marked by overbrackets and underbrackets, respectively, and the cancelled elements are not contained in any 3-run.
Thus we obtain $(\mybox{3},\mybox{1,2},\mybox{3,2,1},\mybox{2},\bcancel{4},\mybox{3,2},\bcancel{4})$ as a 3-run partition of~$\sigma(P)$, where the boxes indicate the runs.
Similarly, $(\mybox{3},\mybox{1,2,3},\mybox{2,1},\mybox{2},\bcancel{4},\mybox{3,2},\bcancel{4})$, $(\mybox{3},\mybox{1,2},\mybox{3,2},\mybox{1,2},\bcancel{4},\mybox{3,2},\bcancel{4})$ and $(\mybox{3},\mybox{1,2,3},\mybox{2},\mybox{1,2},\bcancel{4},\mybox{3,2},\bcancel{4})$ are also valid 3-run partitions.

Although there may be different $\rho$-run partitions, their number and total length, i.e., sum of lengths of all runs, is the same, and we denote these quantities by \defi{$\nu_\rho(P)$} and \defi{$\lambda_\rho(P)$}.
Note that the number of entries of~$\sigma(P)$ exceeding~$\rho$ equals $|\sigma(P)|-\nu_\rho(P)-\lambda_\rho(P)$.
In the example from before we have $\nu_3(P)=4$ and $\lambda_3(P)=3$.

Our first aim is to find a Hamiltonian path~$P$ in the hypercube~$Q_n$ for $n=2^k$ for which the quantity $\nu_{n-1}(P)+2\lambda_{n-1}(P)$ is as large as possible, i.e., we want to maximize the number of flipped directions contained in long $(n-1)$-runs.
We shall see that maximizing this quantity corresponds to minimizing the number of crossings in the Venn diagrams constructed in the next section (see Lemma~\ref{lem:min}).

\begin{lemma}
\label{lem:run}
Let $k\geq 2$ and $n\ass 2^k$.
There is a Hamiltonian path $P$ in~$Q_n$ that satisfies
\[ \nu_{n-1}(P)=\begin{cases}
               6& \text{if } k=2, \\
               \frac{17}{8}\cdot\frac{2^n}{n}-4 & \text{if } k\ge 3,
               \end{cases} \text{   and   }
\lambda_{n-1}(P)=\begin{cases}
                 8 & \text{if } k=2, \\
                 2^n-\frac{25}{8}\cdot \frac{2^n}{n}+4 & \text{if } k\ge 3.
                 \end{cases} \]
\end{lemma}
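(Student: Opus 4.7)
The proof is constructive. By Corollary~\ref{cor:iso-part-cycles}, the vertex set of~$Q_n$ partitions into $N\ass|\langle C_k\rangle|=2^{n-k-1}$ isometric cycles $\{C(x):x\in\langle C_k\rangle\}$, each of length~$2n$ and with the common flip sequence $(1,2,\ldots,n,1,2,\ldots,n)$; hence in isolation each cycle carries two $(n-1)$-runs of length $n-2$. The plan is to splice these cycles into a single Hamiltonian cycle of~$Q_n$ via the inter-cycle edges from Lemma~\ref{lem:cross-edges} and then to delete one $n$-edge to obtain~$P$. The order of splicing is dictated by a Hamiltonian path~$R$ of the quotient graph on~$\langle C_k\rangle$; identifying $\langle C_k\rangle$ with $\mathbb{Z}_2^d$ via the basis~$C_k$, this is a Hamiltonian path of~$Q_d$ with $d\ass|C_k|=2^k-k-1$. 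The $i$th flip $\{a,b\}\in C_k$ of~$\sigma(R)$ tells us how to splice $C(x_i)$ with $C(x_{i+1})=C(x_i\oplus\{a,b\})$: via a triple from $E_{\diagdown}(x_i,a)$ or~$E_{\diagup}(x_i,a)$ when $\{a,b\}\in O_k$ (so $b=a+2$, the favorable case using Lemma~\ref{lem:cross-edges}(iii)), and via a quadruple from~$E(x_i,\{a,b\})$ when $b>a+2$.

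The crucial ingredient is the flip sequence $\sigma(R)$. An $O_k$-splice preserves long runs: the inter-cycle $(a+1)$-edge lets a segment $(1,\ldots,a-1)$ of the run of $C(x_i)$ continue across the splice as the $(a+1)$-bridge and then as $(a+3,\ldots,n-1)$ inside $C(x_{i+1})$, whereas non-$O_k$-splices break runs more destructively. We therefore build~$R$ so that $\sigma(R)$ is concentrated on $O_k$-directions arranged into long monotone runs. Using $C_k=O_k\cup 2C_{k-1}$ and the direct-sum decomposition $\langle C_k\rangle=\langle O_k\rangle\oplus\langle 2C_{k-1}\rangle$ (valid because $O_k$ contains only sets of odd integers and $2C_{k-1}$ only sets of even integers; cf.\ the proof of Lemma~\ref{lem:Cdiff}), we factor $Q_d\cong Q_{|O_k|}\,\square\,Q_{|C_{k-1}|}$. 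We choose $R$ to traverse each coset of $\langle O_k\rangle$ along a Gray-code-like Hamiltonian path using the $O_k$-basis elements $\{1,3\},\{3,5\},\ldots,\{2^k-3,2^k-1\}$ in consecutive order (which produces long monotone runs of $O_k$-directions in~$\sigma(R)$), and transitions between cosets via $2C_{k-1}$-directions arranged so as not to unnecessarily disrupt these runs.

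It remains to tally $\nu_{n-1}(P)$ and $\lambda_{n-1}(P)$. A local case analysis assigns to each splice a fixed contribution $(\Delta\nu,\Delta\lambda)$: an $O_k$-splice at $\{a,a+2\}$ removes three cycle edges of directions $a,a+1,a+2$ and adds the inter-cycle triple, merging and reshaping the original length-$(n-2)$ runs of the two cycles into a controlled collection of sub-runs; a non-$O_k$-splice performs the analogous operation with four edges. Summing these contributions along~$\sigma(R)$ (whose run statistics are explicit from the product construction), starting from the $2N$ initial runs of total length $2(n-2)N$ in the disjoint cycles, and finally applying the $n$-edge cut that turns the Hamiltonian cycle into a path, yields the stated formulas after simplification. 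The main obstacles are: (i)~verifying that each prescribed set of edge removals and additions produces a single connected Hamiltonian cycle without disconnecting or creating multi-edges; (ii)~tracking~$\sigma(P)$ through blocks of consecutive $O_k$-splices, which interact in nontrivial ways since successive splices alter the endpoints available for the next one; and (iii)~pinning down the additive constant~$-4$ arising from boundary effects at the first and last splice and at the final edge deletion. The case $k=2$ is handled separately by an explicit splice of the two $8$-cycles partitioning~$Q_4$.
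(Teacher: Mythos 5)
There is a genuine gap, and it is in the core mechanism of your construction. Your ``$O_k$-splice'' --- removing three cycle edges of directions $a,a+1,a+2$ and adding the three edges of $E_{\diagdown}(x_i,a)$ or $E_{\diagup}(x_i,a)$ --- cannot produce a Hamiltonian cycle, for parity reasons: each added edge has one endpoint on $C(x_i)$ and one on $C(x_{i+1})$, so the three additions raise the total degree on each cycle's vertex set by $3$ (odd), whereas each removed cycle edge has both endpoints on a single cycle and lowers that cycle's total degree by $2$ (even). Hence after the splice some vertex has degree $\ne 2$, the graph is not $2$-regular, and no sequence of such operations yields a Hamiltonian cycle of $Q_n$. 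You have conflated the paper's two gadgets: the triples $E_{\diagdown},E_{\diagup}$ and the quadruple $E(x,\{a,b\})$ are edge \emph{additions} used only in the dual-graph construction of Lemma~\ref{lem:min}, where degree-$3$ vertices are harmless; in the proof of Lemma~\ref{lem:run} the paper instead takes the symmetric difference of the cycle factor with the $4$-cycles $F(x_i,c_{s_i})$ (one edge removed from each of the two cycles, two cross edges added), uniformly for \emph{all} $s_i$, regardless of whether $c_{s_i}\in O_k$, and edge-disjointness of consecutive $4$-cycles (Lemma~\ref{lem:Cdiff}) guarantees this is well defined.

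Your plan for the guiding path $R$ is also misdirected. Long monotone runs in $\sigma(R)$ are neither achievable --- a Gray-code traversal of the $\langle O_k\rangle$-coordinates flips direction $1$ in every other step, and demanding a flip sequence made of long monotone runs is essentially Slater's problem, noted as open in Section~\ref{sec:open} --- nor needed: the long $(n-1)$-runs of $\sigma(P)$ come from the flip sequences $(1,2,\ldots,n,1,2,\ldots,n)$ of the cycles $C(x_i)$ themselves, and what matters is only that the \emph{frequently used} entries of $\sigma(R)$ index $2$-sets with small elements, so that each splice cuts a run near its start. Accordingly the paper sets $c_1=\{1,3\}$, $c_2=\{3,5\}$, $c_3=\{2,6\}$ --- note $c_3\in 2C_{k-1}$, not $O_k$, contradicting your premise that $O_k$-directions should dominate --- and takes $\sigma(R)=(S,s_8,S,s_{16},\ldots,S)$ with $S=(1,2,1,3,1,2,1)$ and $s_{8i}\ge 4$ (the binary reflected Gray code works). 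The quantitative heart of the lemma is then the explicit block computation: gluing each group of eight cycles along a copy of $S$ yields exactly $30$ runs of total length $16n-46$, each remaining splice with $s_i\ge 4$ changes $(\nu,\lambda)$ by $(+4,-4)$, and removing one $n$-edge changes nothing; summing gives precisely $\tfrac{17}{8}\cdot\tfrac{2^n}{n}-4$ and $2^n-\tfrac{25}{8}\cdot\tfrac{2^n}{n}+4$. Your proposal defers exactly this content to the admitted ``obstacles'' (i)--(iii), so the stated constants are never verified; combined with the parity obstruction above, the argument as written does not prove the lemma.
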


\begin{proof}
For $k=2$ we take the Hamiltonian path~$P$ with $\sigma(P)=(\mybox{1,2,3},\mybox{2},\mybox{1,2,3},\bcancel{4},\mybox{3,2,1},\mybox{2},\mybox{3,2,1})$, proving that $\nu_3(P)=6$ and $\lambda_3(P)=8$; see the top part of Figure~\ref{fig:run}.

\begin{figure}[t!]
\centerline{
\includegraphics[page=4]{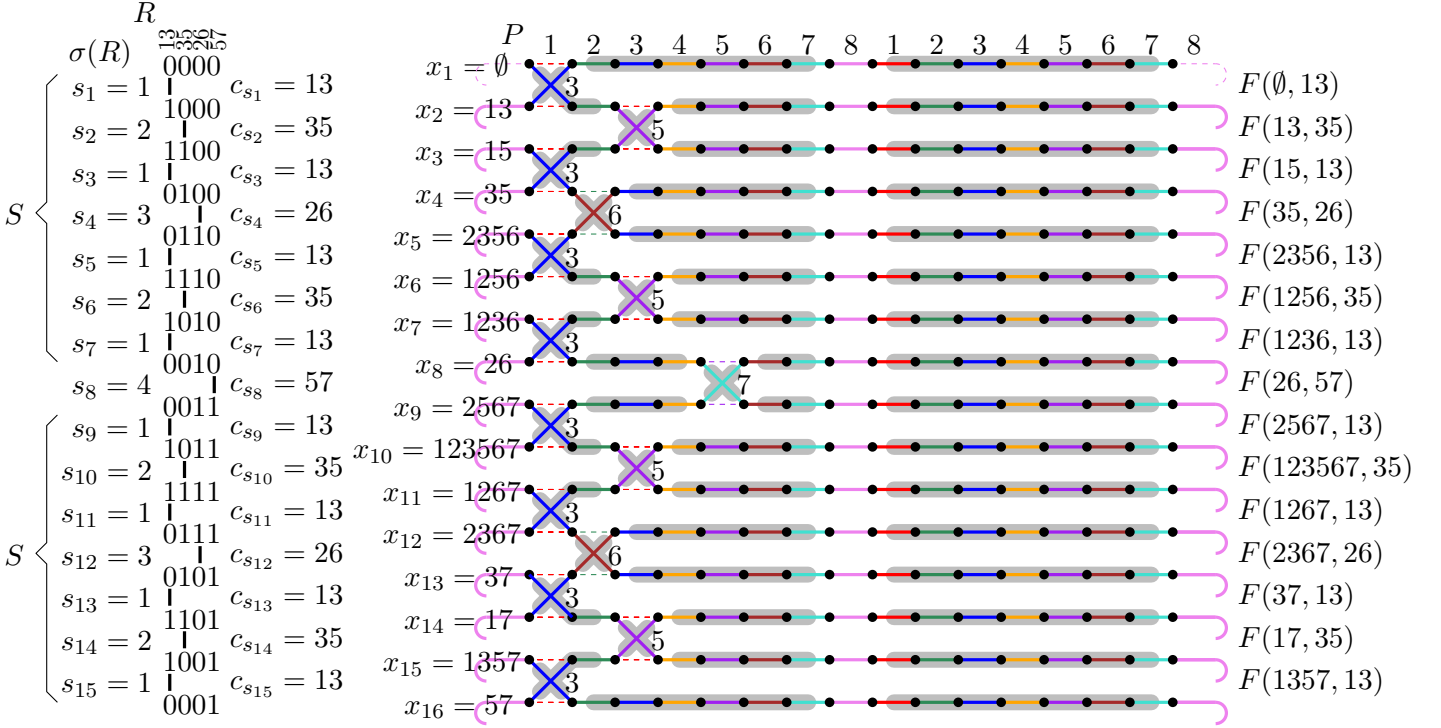}
}
\caption{Illustration of the proof of Lemma~\ref{lem:run}.
The edges that are removed while taking the symmetric difference are dashed, including the wrap-around edge of the first cycle.
The subsequences highlighted in gray are $(n-1)$-runs.}
\label{fig:run}
\end{figure}

For $k\ge 3$ we construct a Hamiltonian path~$P$ in~$Q_n$ as follows; see the bottom part of Figure~\ref{fig:run}.
We start with the partition of~$Q_n$ into cycles given by Corollary~\ref{cor:iso-part-cycles}.
Formally, we define the cycle factor $\cC\ass \bigcup_{x\in\langle C_k\rangle}C(x)$.
Each of the cycles of~$\cC$ has length~$2n$ and the total number of cycles is~$2^n/(2n)=2^{2^k}/2^{k+1}=2^{2^k-k-1}=2^d$ for $d\ass 2^k-k-1$.
Recall that $|C_k|=2^k-k-1=d$.

Let $(c_1,\ldots,c_d)$ be a sequence obtained by sorting all elements of the set~$C_k$ such that $c_1=\{1,3\}, c_2=\{3,5\}, c_3=\{2,6\}$, and the remaining elements appear in arbitrary order.
We take a Hamiltonian path~$R$ in~$Q_d$ whose flip sequence $\sigma(R)\assr(s_1,\ldots,s_{2^d-1})$ alternates copies of the subsequence $S\ass (1,2,1,3,1,2,1)$ with single flips $s_{8i}\ge 4$ for $i\geq 1$, starting with~$S$, i.e., $\sigma(R)=(S,s_8,S,s_{16},S,s_{24},\ldots,S,s_{2^d-8},S)$.
For example, we may take for~$R$ the well-known binary reflected Gray code in~$Q_d$.
Note that $\sigma(R)$ contains exactly $2^{d-3}$ copies of~$S$.
We define an ordering $x_1,\ldots,x_{2^d}$ of all elements of~$\langle C_k\rangle$ recursively by $x_1\ass \emptyset$ and $x_{i+1}\ass x_i\oplus c_{s_i}$ for $i=1,\ldots,2^d-1$.
Note that this definition is valid since each element of $\langle C_k\rangle$ is a linear combination of the basis~$C_k$, and $R$ visits all $d$-tuples of $\{0,1\}$-coefficients.

The symmetric difference of the edge sets of the cycle factor~$\cC$ with all 4-cycles~$F(x_i,c_{s_i})$ for $i=1,\ldots,2^d-1$ yields a Hamiltonian cycle in~$Q_n$, and removing one of its $n$-edges gives a Hamiltonian path~$P$ in~$Q_n$.
Note that the 4-cycles $F(x_i,c_{s_i})$ and $F(x_{i+1},c_{s_{i+1}})$ are edge-disjoint for $i=1,\ldots,2^d-2$ by Lemma~\ref{lem:Cdiff}.

In the remainder of the proof we compute the quantities~$\nu_{n-1}(P)$ and~$\lambda_{n-1}(P)$.
Recall from~\eqref{eq:Px} and~\eqref{eq:Cx} that for each cycle~$C(x_i)$, $i=1,\ldots,2^d$, the flip sequence is $\sigma(C(x_i))=(1,2,\ldots,n,1,2,\ldots,n)$, i.e., it has two maximal $(n-1)$-runs of length~$n-2$ each.
Observe from Figure~\ref{fig:run} that when gluing together the eight cycles~$C(x_i)$ for which $s_i\in\{1,2,3\}$ belongs to a copy of~$S$ in~$\sigma(R)$, then the flip sequence of the resulting cycle has the $(n-1)$-run partition
\begin{alignat*}{8}
   & \!\!\!\big(\mybox{3,2},\mybox{5},\mybox{4,\dots,n-1},\bcancel{n},\mybox{1,\dots,n-1},\bcancel{n}, \\
   & \mybox{3},\mybox{6},\mybox{3,\dots,n-1},\bcancel{n},\mybox{1,\dots,n-1},\bcancel{n}, \\
   & \mybox{3,2},\mybox{5},\mybox{4,\dots,n-1},\bcancel{n},\mybox{1,\dots,n-1},\bcancel{n}, \\
   & \mybox{3},\mybox{2,\dots,n-1},\bcancel{n},\mybox{1,\dots,n-1},\bcancel{n},\\
   & \mybox{3,2},\mybox{5},\mybox{4,\dots,n-1},\bcancel{n},\mybox{1,\dots,n-1},\bcancel{n}, \\
   & \mybox{3},\mybox{6},\mybox{3,\dots,n-1},\bcancel{n},\mybox{1,\dots,n-1},\bcancel{n},\\
   & \mybox{3,2},\mybox{5},\mybox{4,\dots,n-1},\bcancel{n},\mybox{1,\dots,n-1},\bcancel{n},\\
   & \mybox{3},\mybox{2,\dots,n-1},\bcancel{n},\mybox{1,\dots,n-1},\bcancel{n}\big),
\end{alignat*}
consisting of 30 runs of total length $16n-46$.

When taking the symmetric difference with the $2^{d-3}-1$ remaining 4-cycles $F(x_i,c_{s_i})$ with $s_i\geq 4$, then inserting each such 4-cycle splits two runs $\mybox{2,\dots,n-1}$ into two times two runs $\mybox{2,\dots,a-1}$, $\mybox{a+1,\dots,n-1}$ and two trivial runs $\mybox{b}$, where $\{a,b\}\ass c_{s_i}$ such that $a<b$, i.e., the number of runs increases by~4 and the total length decreases by~4.

Lastly, note that removing one $n$-edge from the resulting Hamiltonian cycle to break it into the Hamiltonian path~$P$ does not change any $(n-1)$-runs.

Combining these observations, we obtain
\begin{equation*}
\begin{split}
\nu_{n-1}(P) &= 30\cdot 2^{d-3}+4\cdot(2^{d-3}-1)=\frac{17}{8}\cdot\frac{2^n}{n}-4 \;\;\text{ and } \\
\lambda_{n-1}(P) &= (16n-46)\cdot2^{d-3}-4\cdot(2^{d-3}-1)=2^n-\frac{25}{8}\cdot \frac{2^n}{n}+4,
\end{split}
\end{equation*}
where we used the relation $2^d=\frac{2^n}{2n}$.
This completes the proof of the lemma.
\end{proof}

For a general dimension that is not necessarily a power of~2 we generalize the result from before by applying a straightforward product construction.

\begin{corollary}
\label{cor:run}
Let $k\geq 2$, $n\ass 2^k$, and $0\le m <n$.
There is a Hamiltonian path~$P$ in~$Q_{n+m}$ that satisfies
\[ \nu_{n-1}(P)=\begin{cases}
               6\cdot 2^m & \text{if } k=2, \\
               \big(\frac{17}{8}\cdot\frac{2^n}{n}-4\big)\cdot 2^m & \text{if } k\ge 3,
               \end{cases} \text{   and   }
\lambda_{n-1}(P)=\begin{cases}
                 8\cdot 2^m & \text{if } k=2, \\
                 \big(2^n-\frac{25}{8}\cdot\frac{2^n}{n}+4\big)\cdot 2^m & \text{if } k\ge 3.
                 \end{cases} \]
\end{corollary}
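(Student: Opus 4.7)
The plan is to apply a straightforward product construction on top of Lemma~\ref{lem:run}. Let $P_0$ denote the Hamiltonian path in $Q_n$ from Lemma~\ref{lem:run}, with endpoints $a$ and $b$. We view $Q_{n+m}$ as the product $Q_n \times Q_m$, where the $m$ additional coordinates correspond to directions $n+1,n+2,\ldots,n+m$. For $m=0$ we simply take $P \ass P_0$, so assume $m \geq 1$ and fix any Hamiltonian path $R$ in $Q_m$ (for instance, the binary reflected Gray code) with flip sequence $(r_1,\ldots,r_{2^m-1})$, where $r_i \in [m]$.

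We define $P$ as follows. The path traverses the $2^m$ layers $Q_n \times \{y\}$ in the order determined by $R$, using $P_0$ in each layer and alternating the orientation: odd-indexed layers are traversed forwards (from $(a,y)$ to $(b,y)$) and even-indexed layers backwards, with consecutive layers linked by a single edge of direction $n+r_i$. This yields a Hamiltonian path in $Q_{n+m}$ with flip sequence
\[
\sigma(P)=\bigl(\sigma(P_0),\ n+r_1,\ \sigma(P_0)^R,\ n+r_2,\ \sigma(P_0),\ n+r_3,\ \sigma(P_0)^R,\ \ldots\bigr),
\]
where $\sigma(P_0)^R$ denotes the reverse of $\sigma(P_0)$.

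To compute $\nu_{n-1}(P)$ and $\lambda_{n-1}(P)$, the key observation is that each separator flip has direction $n+r_i \geq n+1 > n-1$ and therefore cannot appear in any $(n-1)$-run nor extend one from either neighboring block. Hence every maximal $(n-1)$-run of $\sigma(P)$ is entirely contained in one of the $2^m$ blocks $\sigma(P_0)$ or $\sigma(P_0)^R$, and these blocks contribute independently to the totals. Reversing a flip sequence maps every increasing $(n-1)$-run $(a,a+1,\ldots,a+\ell)$ to a decreasing $(n-1)$-run $(a+\ell,\ldots,a)$ and vice versa, preserving both the number and the total length of maximal runs, so each block contributes $\nu_{n-1}(P_0)$ runs of total length $\lambda_{n-1}(P_0)$. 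Summing over all $2^m$ blocks and substituting the formulas from Lemma~\ref{lem:run} yields the claim. The argument is essentially bookkeeping, and no substantial obstacle is anticipated; the only delicate point, that separator flips neither split nor extend the runs of adjacent blocks, is immediate since their directions lie strictly above~$n-1$.
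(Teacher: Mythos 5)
Your proof is correct and takes essentially the same route as the paper's: the paper likewise realizes $Q_{n+m}$ as the Cartesian product $Q_n \mathbin{\square} Q_m$ and builds the Hamiltonian path by concatenating alternately oriented copies of $\sigma(P_0)$ from Lemma~\ref{lem:run}, joined by single flips of direction greater than~$n$. Your explicit bookkeeping --- that the separator flips exceed $n-1$ and so confine every $(n-1)$-run to a single block, and that reversal preserves $\nu_{n-1}$ and $\lambda_{n-1}$ --- merely spells out what the paper's terser proof leaves implicit.
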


\begin{proof}
We use the fact that $Q_{n+m}$ is isomorphic to the Cartesian product $Q_n \mathbin{\square} Q_m$.
Consequently, if $P$ is a Hamiltonian path in~$Q_n$ and $R$ is a Hamiltonian path in~$Q_m$ with flip sequence $\sigma(R)\assr(s_1,\dots,s_{2^m-1})$, then $Q_{n+m}$ has a Hamiltonian path~$P'$ with flip sequence
\[ \sigma(P')=\big(\sigma(P),s_1+n,\rev(\sigma(P)),s_2+n,\sigma(P),\ldots,s_{2^m-2}+n,\sigma(P),s_{2^m-1}+n,\rev(\sigma(P))\big), \]
where $\rev(\sigma(P))$ denotes the reverse of the sequence~$\sigma(P)$.
By taking the Hamiltonian path~$P$ in~$Q_n$ given by Lemma~\ref{lem:run} and any Hamiltonian path $R$ in~$Q_m$, the lemma follows.
\end{proof}

Note that as $n\rightarrow \infty$, we have $\lambda_{n-1}(P)=(1-o(1))2^{n+m}$, i.e., almost all flips along the Hamiltonian path~$P$ belong to an $(n-1)$-run.

\section{Proof of Theorem~\ref{thm:min}}

In this section, we decribe our constructions of almost-minimum $n$-Venn diagrams for the case where $n$ is a power of 2, thus proving Theorem~\ref{thm:min}.

\begin{lemma}
\label{lem:min}
Let $k\geq 3$, $n\ass 2^k$, $d\ass 2^k-k-1$, and $\rho\ass 2^{k-1}-1=n/2-1$, and let $P$ be a Hamiltonian path in~$Q_d$.
Then there is an $n$-Venn diagram with exactly $2\cdot\frac{2^n}{n}-\nu_\rho(P)-2\lambda_\rho(P)-2$ many crossings.
\end{lemma}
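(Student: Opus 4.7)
The plan is to construct a planar spanning subgraph~$G$ of~$Q_n$ satisfying the characterizing properties \circled{1}--\circled{3} of Venn diagram duals from Section~\ref{sec:dual}, with $|F(G)|$ equal to the stated count. Since \circled{3} implies that $G$ is connected, Euler's formula gives $|F(G)|=2-2^n+|E(G)|$, so the problem reduces to controlling the edge count exactly. I will index $C_k=(c_1,\ldots,c_d)$ so that $c_i=\{2i-1,2i+1\}$ for $1\le i\le \rho$ (thereby listing~$O_k$) and $c_{\rho+1},\ldots,c_d$ enumerate $2C_{k-1}$ in some order; with this convention, $s_i\le \rho$ is equivalent to $c_{s_i}\in O_k$, i.e., to the two elements of~$c_{s_i}$ differing by~$2$.

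First, I would take the isometric cycle factor $\cC=\bigcup_{x\in\langle C_k\rangle}C(x)$ from Corollary~\ref{cor:iso-part-cycles} (contributing $2^n$ cycle edges), and use the flip sequence $\sigma(P)=(s_1,\ldots,s_{2^d-1})$ of~$P$ to order $\langle C_k\rangle$ by $x_1\ass \emptyset$ and $x_{i+1}\ass x_i\oplus c_{s_i}$. The cycles $C(x_1),\ldots,C(x_{2^d})$ are drawn concentrically in the plane in this order, and between each consecutive pair I insert cross edges supplied by Lemma~\ref{lem:cross-edges}: the 4-edge set $E(x_i,c_{s_i})$ when $s_i>\rho$ (so $c_{s_i}=\{a,b\}$ with $b>a+2$), and the 3-edge set $E_{\diagdown}(x_i,a_i)$ or $E_{\diagup}(x_i,a_i)$ when $s_i\le \rho$ (so $c_{s_i}=\{a_i,a_i+2\}$), with the $\diagdown$/$\diagup$ choice dictated by the direction of the run containing~$s_i$ in a fixed $\rho$-run partition of~$\sigma(P)$. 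Because the number of indices~$i$ with $s_i\le \rho$ equals the number of non-cancelled entries of~$\sigma(P)$, which is $\nu_\rho(P)+\lambda_\rho(P)$, the edge count at this stage is
\[ 2^n+4(2^d-1)-(\nu_\rho(P)+\lambda_\rho(P))=2^n+2\cdot\tfrac{2^n}{n}-4-\nu_\rho(P)-\lambda_\rho(P). \]

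The second step shaves off $\lambda_\rho(P)$ further edges by exploiting the runs. Whenever $s_i$ and $s_{i+1}$ belong to the same run of the fixed partition, the compatible choice of $\diagdown$/$\diagup$ variants for bridges~$i$ and~$i+1$ forces two 6-faces, one on either side of the intermediate cycle~$C(x_{i+1})$, to share a single $(a_i+1)$-edge of~$C(x_{i+1})$ on their boundary. I delete this cycle edge, merging the two 6-faces into one larger face and removing exactly one edge (hence one face) per transition in the partition. Performing this deletion once per transition, $\lambda_\rho(P)$ times in total, leaves
\[ |E(G)|=2^n+2\cdot\tfrac{2^n}{n}-4-\nu_\rho(P)-2\lambda_\rho(P), \]
and Euler's formula then yields $|F(G)|=2\cdot\tfrac{2^n}{n}-\nu_\rho(P)-2\lambda_\rho(P)-2$, matching the claim.

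The final step is to verify that~$G$ is indeed the dual of a Venn diagram. Property~\circled{1} is immediate. For~\circled{2}, a face-by-face inspection will confirm that each face has even length~$2\ell$ with exactly two edges of each of $\ell$ distinct directions, using the flip sequence $(1,2,\ldots,n,1,2,\ldots,n)$ of every~$C(x)$ and the direction bookkeeping in Lemma~\ref{lem:cross-edges}; this property persists under the 3-edge bridges inserted in the first step and under the merges performed in the second step. For~\circled{3}, every $j$-arc of a cycle is preserved across all deletions except when an $(a_i+1)$-edge with $a_i+1=j$ is removed, in which case the adjacent cross edges of bridges~$i$ and~$i+1$ provide an alternative $j$-routing within~$G$. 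The hardest part will be the global case analysis behind the second step: specifying the $\diagdown$/$\diagup$ assignments so that the 6-faces across consecutive bridges of a long run line up correctly, handling the junctions between distinct runs (and between increasing and decreasing orientations), and verifying that the chain of merges along a run of length~$\ell$ produces a single face of even size in which all involved directions still appear exactly twice.
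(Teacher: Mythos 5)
Your construction follows the paper's proof step for step: the same cycle factor $\cC$ from Corollary~\ref{cor:iso-part-cycles}, the same ordering of $C_k$ with the elements of~$O_k$ listed first as $c_i=\{2i-1,2i+1\}$, the same ordering $x_{i+1}=x_i\oplus c_{s_i}$ of~$\langle C_k\rangle$ driven by~$\sigma(P)$, the same concentric embedding, the same rule selecting $E(x_i,c_{s_i})$ for $s_i>\rho$ and $E_{\diagdown}/E_{\diagup}$ for $s_i\le\rho$ according to the orientation of the run in a fixed $\rho$-run partition, and the same $\lambda_\rho(P)$ cycle-edge deletions, one per within-run transition. Counting edges and invoking Euler's formula, rather than counting the faces directly as the paper does, is an immaterial difference, and your arithmetic is correct.

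Two points need repair, however. First, the shared edge is mislabeled: the 6-face of bridge~$i$ uses directions $a_i,a_i+1,a_i+2$, while that of bridge~$i+1$ (in an increasing run) uses $a_i+2,a_i+3,a_i+4$; their direction sets are shifted by~$2$ and overlap only in the single direction $a_i+2=\max c_{s_i}=\min c_{s_{i+1}}$ (direction~$a_i$ in the decreasing case), so the edge of~$C(x_{i+1})$ shared by the two 6-faces is an $(a_i+2)$-edge, not an $(a_i+1)$-edge. This is not cosmetic: the upper 6-face has no edge of direction $a_i+1$ at all, so deleting the $(a_i+1)$-edge of~$C(x_{i+1})$ lying on the lower 6-face would merge that 6-face with one of the $2n$-faces at the next level, producing a face in which directions $a_i$ and $a_i+2$ each occur four times and condition~\circled{2} fails (the numeric face count would still drop by one per deletion, masking the error). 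Your own defining phrase, the edge shared by the two 6-faces, identifies the correct edge; only the direction bookkeeping is off. Second, the part you defer is where the actual work lies, and your sketch of it points at the wrong mechanism. Condition~\circled{3} is proved by induction on~$i$, showing that every vertex of~$C(x_{i+1})$ reaches~$C(x_i)$ along a path avoiding $j$-edges. The case $j$ equal to the direction of the removed edge (i.e., $j=a+2$) is the easy one, since the remaining bridge edges of directions $a$ and $a+1$ attach antipodal vertices of~$C(x_{i+1})$. The delicate case is $j\in\{a,a+1\}$: then one of the three bridge edges is itself a $j$-edge, and the endpoints $u,v$ of the removed $(a+2)$-edge are rescued not by ``adjacent cross edges of bridges $i$ and $i+1$'' but by the path between $u$ and~$v$ along the boundary of the merged face, which passes through the deeper cycles $C(x_{i+2}),C(x_{i+3}),\ldots$ and contains no edges of direction $a$ or~$a+1$. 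With the shared-edge direction corrected and this routing argument supplied, your skeleton is exactly the paper's proof.
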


\begin{proof}
To prove the lemma, we construct a plane subgraph~$H\seq Q_n$ satisfying properties~\circled{1}--\circled{3} stated in Section~\ref{sec:dual} that has exactly $2\cdot\frac{2^n}{n}-\nu_\rho(P)-2\lambda_\rho(P)-2$ many faces, as follows.
We start with the partition of~$Q_n$ into cycles given by Corollary~\ref{cor:iso-part-cycles}.
Formally, we define the cycle factor $\cC\ass \bigcup_{x\in\langle C_k\rangle}C(x)$.
Each of the cycles of~$\cC$ has length~$2n$ and the total number of cycles is~$2^n/(2n)=2^{2^k}/2^{k+1}=2^{2^k-k-1}=2^d$.
Recall that $|C_k|=2^k-k-1=d$.

Let $c=(c_1,\ldots,c_d)$ be a sequence obtained by sorting all elements of the set~$C_k$ such that all 2-sets from $O_k\subseteq C_k$ appear first and with increasing minimum values, and the remaining 2-sets appear afterwards in arbitrary order.
That is, we have $(c_1,\dots, c_\rho)=(\{1,3\},\{3,5\},\ldots,\{2^k-3,2^k-1\})$, which is the order from~\eqref{eq:OkCk}, or equivalently, $c_i=\{2i-1,2i+1\}$ for $1\le i \le \rho$.
Recall that $|O_k|=2^{k-1}-1=\rho\leq d$.

\begin{figure}
\centerline{
\includegraphics[page=5]{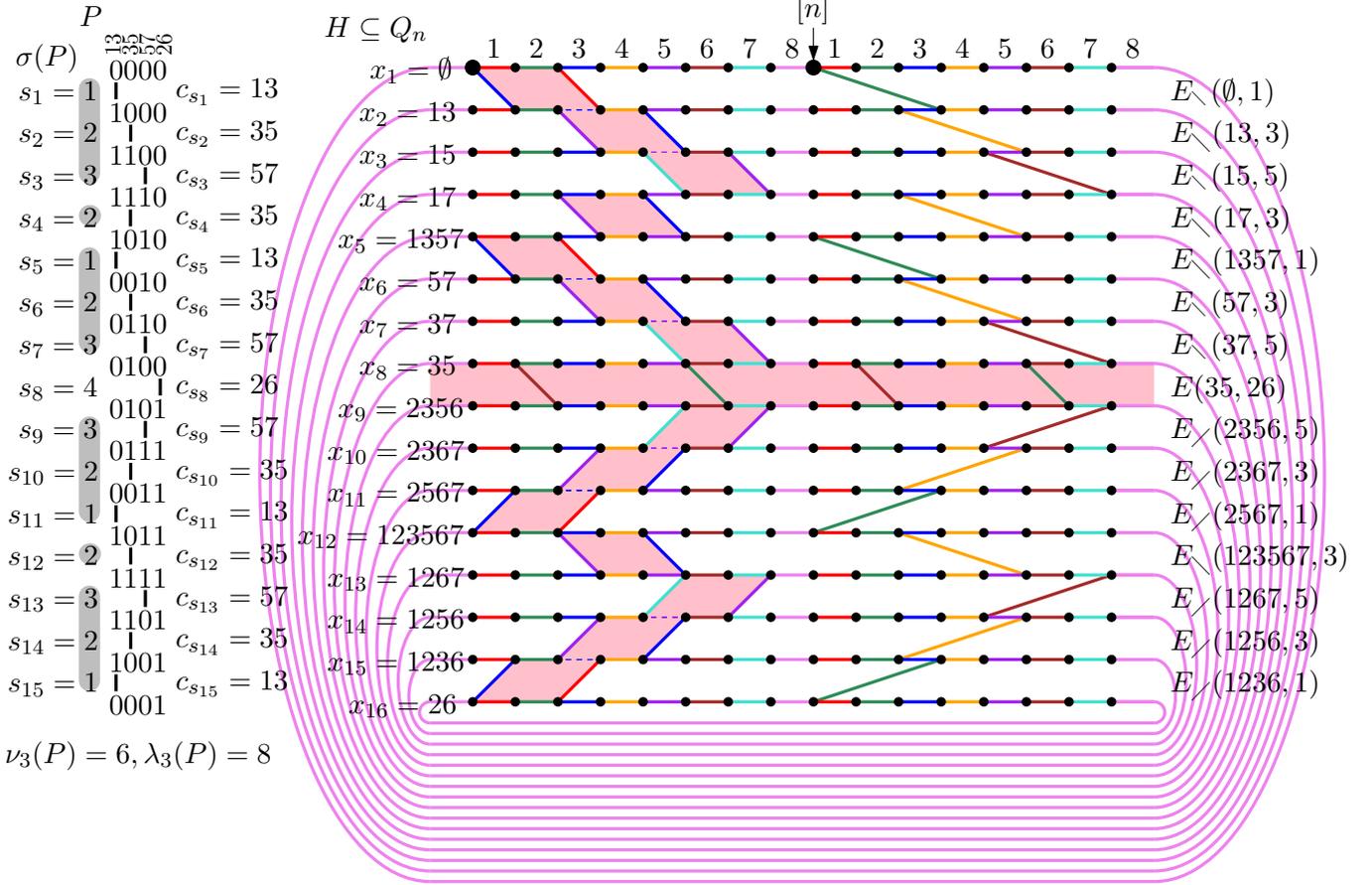}
}
\caption{Illustration of the proof of Lemma~\ref{lem:min} for the case~$k=3$, i.e., $n=8$.
The dashed edges are deleted, so they are not part of~$H$.
The constructed dual graph~$H$ of an 8-Venn diagram has 40 faces, i.e., the corresponding Venn diagram has 40 crossings; see Figure~\ref{fig:v8-primal}, only 3 more than the lower bound~$L_8=37$.
All faces except the red ones have maximum possible length~$2n$.}
\label{fig:v8-dual}
\end{figure}

We consider the flip sequence~$\sigma(P)\assr(s_1,\ldots,s_{2^d-1})$ of the given Hamiltonian path~$P$ in~$Q_d$.
The $\rho$-runs in this sequence will be particularly relevant for our construction.
We define an ordering $x_1,\ldots,x_{2^d}$ of all elements of~$\langle C_k\rangle$ recursively by $x_1\ass \emptyset$ and $x_{i+1}\ass x_i\oplus c_{s_i}$ for $i=1,\ldots,2^d-1$.
Note that this definition is valid since each element of~$\langle C_k\rangle$ is a linear combination of the basis~$C_k$, and $P$ visits all $d$-tuples of $\{0,1\}$-coefficients.

To construct~$H$, we first embed the cycles of the factor~$\cC$ in the plane by nesting them concentrically according to this ordering, i.e., $C(x_1)$ is the outermost cycle, and for $i=1,\ldots,2^d-1$, the cycle $C(x_{i+1})$ is nested concentrically inside~$C(x_i)$.

Next, we add edges between every pair of consecutive cycles~$C(x_i)$ and~$C(x_{i+1})$ for $i=1,\ldots,2^d-1$ according to the following rules; see Figure~\ref{fig:v8-dual}.
We first fix a $\rho$-run partition of~$P$ throughout, so whenever we refer to a $\rho$-run in the following, we mean a run in this fixed partition.
If $s_i\leq \rho$, then we have $c_{s_i}=\{a,a+2\}$ for $a\ass 2s_i-1$.
If the entry~$s_i$ of~$\sigma(P)$ is contained in an increasing $\rho$-run, then we add the three edges of~$E_{\diagdown}(x_i,a)$, and if it is contained in a decreasing $\rho$-run, then we add the three edges of~$E_{\diagup}(x_i,a)$ instead.
If the run has length~0, i.e., it consists only of a single entry, we treat it as one of the two cases, say, increasing.
On the other hand, if $s_i>\rho$, then we add the four edges of~$E(x_i,c_{s_i})$.

In this way, we obtain an intermediate connected plane graph with $C(x_1)$ as the outer face and $C(x_{2^d})$ as the innermost face that has either 3 or 4~faces between cycles~$C(x_i)$ and~$C(x_{i+1})$ for $i=1,\dots,2^d-1$, namely if $s_i \leq \rho$ or $s_i>\rho$ holds, respectively.
Since the number of entries~$s_i$ in $\sigma(P)$ with $s_i\leq \rho$ is exactly $\nu_\rho(P)+\lambda_\rho(P)$, the number of faces of the intermediate graph is exactly
\[ 2+4(2^d-1)-(\nu_\rho(P)+\lambda_\rho(P))=2\cdot \frac{2^n}{n}-\nu_\rho(P)-\lambda_\rho(P)-2. \]

In the last step of constructing~$H$, we remove edges from some of the cycles~$C(x_i)$, $i=2,\ldots,2^d-1$, according to the following rules:
If $s_{i-1}$ and~$s_i$ belong to the same increasing $\rho$-run in~$\sigma(P)$, then we have $c_{s_{i-1}}=\{a-2,a\}$ and~$c_{s_i}=\{a,a+2\}$ for $a\ass 2s_i-1$, and then we remove the first $a$-edge of~$C(x_i)$.
If $s_{i-1}$ and~$s_i$ belong to the same decreasing $\rho$-run in~$\sigma(P)$, then we have $c_{s_{i-1}}=\{a,a+2\}$ and~$c_{s_i}=\{a-2,a\}$ for $a\ass 2s_i+1$, and then we remove the first $a$-edge of~$C(x_i)$.
This completes the description of how to construct the subgraph~$H$ of~$Q_n$.

Note that each removed edge decreases the number of faces by~1.
Since the number of removed edges is exactly~$\lambda_\rho(P)$, the number of faces of~$H$ is exactly
\[ 2\cdot\frac{2^n}{n}-\nu_\rho(P)-2\lambda_\rho(P)-2, \]
as claimed.

It remains to check that the graph~$H\seq Q_n$ defined before satisfies conditions~\circled{1}--\circled{3}.

Condition~\circled{1} holds since the cycles $C(x_i)$ for $i=1,\dots,2^d$ form a partition of~$Q_n$ by Corollary~\ref{cor:iso-part-cycles}.

To verify condition~\circled{2}, first observe in Figure~\ref{fig:cross-edges}~(a1), (b1) and~(b2) that no two edges from any of the sets~$E(x,\{a,b\})$, $E_{\diagdown}(x,a)$, or $E_{\diagup}(x,a)$ are crossing and thus $H$ is a plane graph.
Furthermore, observe from Figure~\ref{fig:cross-edges}~(a1) that the four facial cycles between two cycles obtained by adding the edge set~$E(x,\{a,b\})$ have lengths $2(b-a)+2$ and $2(n-(b-a))+2$ and flip sequences
\begin{align*}
   & (a,a+1,\ldots,b-1,a,b,b-1,\dots,a+1,b), \text{ and} \\
   & (b,b+1,\ldots,n,1,2,\dots,a-1,b,a,a-1,\ldots,1,n,n-1,\dots,b+1,a),
\end{align*}
respectively.
Similarly, observe from Figures~\ref{fig:cross-edges}~(b1) and~(b2) that the three facial cycles between two cycles obtained by adding the edge set~$E_{\diagdown}(x,a)$ or~$E_{\diagup}(x,a)$ have lengths 6, $2n$, and $2n$ and flip sequences
\begin{align*}
   & (a,a+1,a,a+2,a+1,a+2), \\
   & (a+2,a+3,\ldots,n,1,2,\ldots,a-1,a+1,a+2,a+1,\dots,1,n,n-1,\ldots,a+3,a), \text{ and} \\
   & (a,a+1,\ldots,n,1,2,\ldots,a-1,a+2,a,a-1,\ldots,1,n,n-1,\dots,a+3,a+1),
\end{align*}
respectively.
It can be checked directly that the aforementioned faces, and also the outer face~$C(x_1)$ and the innermost face~$C(x_{2^d})$ satisfy condition~\circled{2}.

In the last step of the construction we removed some edges from the cycles~$C(x_i)$.
Specifically, for each increasing or decreasing run of length~$\ell\geq 2$, we glue together $\ell$ adjacent 6-cycles, creating a new facial cycle of length~$4\ell+2$.
Observe that the adjacent 6-faces have edge directions shifted by~2 and therefore their directions overlap only in the largest resp.\ smallest direction, and the shared direction is the direction of the edge that was removed.
It follows that the resulting cycles of length~$4\ell+2$ also satisfy condition~\circled{2}.

To prove condition~\circled{3} we establish by induction on $i=1,\dots,2^d$ that for every direction $j\in [n]$ every vertex~$x$ of~$C(x_i)$ is connected in~$H$ to either the vertex~$\emptyset$ or~$[n]$ along a path that avoids any $j$-edges.
Specifically, $x$ is connected to~$\emptyset$ if $j\notin x$ and to~$[n]$ if $j\in x$.

To settle the induction basis $i=1$, recall that the flip sequence of~$C(x_1)$ is $\sigma(C(x_1))=(1,2,\ldots,n,1,2,\ldots,n)$, so removing any two $j$-edges for some $j\in[n]$ from~$C(x_1)$ results in two antipodal paths of length~$n-1$, one containing the vertex~$\emptyset$ and the other containing the vertex~$[n]$.

For the induction step~$i\rightarrow i+1$, it suffices to show that every vertex of~$C(x_{i+1})$ is connected in~$H$ to some vertex of~$C(x_i)$ along a path that has no $j$-edges.
We distinguish two cases according to which edges were added between~$C(x_i)$ and~$C(x_{i+1})$.

In the case when the four edges of $E(x_i,c_{s_i})$, $c_{s_i}\assr\{a,b\}$, were added between~$C(x_i)$ and~$C(x_{i+1})$, observe that the two added $a$-edges are incident to antipodal vertices of~$C(x_{i+1})$, and the same is true for the two added $b$-edges.
Hence removing any two $j$-edges, where $j\in[n]$, from $C(x_{i+1})$ results in two subpaths that are both connected to~$C(x_i)$ along an edge of~$E(x_i,c_{s_i})$ that is not a $j$-edge, even if $j\in \{a,b\}$.

\begin{figure}[h!]
\centerline{
\includegraphics[page=3]{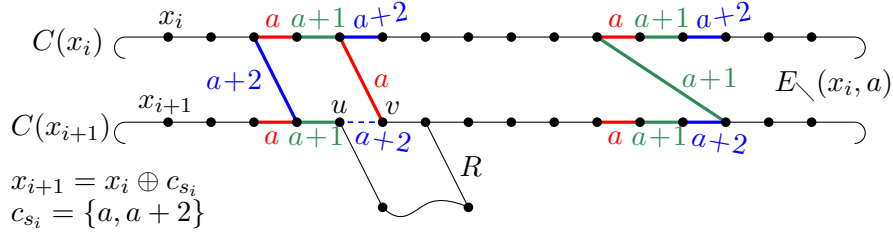}
}
\caption{Verification of condition~\protect\circled{3} in the proof of Lemma~\ref{lem:min}.}
\label{fig:cond3}
\end{figure}

Now we consider the case when the three edges of~$E_{\diagdown}(x_i,a)$ were added between~$C(x_i)$ and $C(x_{i+1})$ (i.e., we have $b=a+2$).
The third case when the edges of~$E_{\diagup}(x_i,a)$ were added is analogous.
We will also assume w.l.o.g.\ that the first $(a+2)$-edge on~$C(x_{i+1})$ was removed; otherwise the situation is even simpler; see Figure~\ref{fig:cond3}.
We denote the removed $(a+2)$-edge by~$\{u,v\}$.
If $j\notin\{a,a+1,a+2\}$ then removing the $j$-edges from $C(x_{i+1})$ results in three subpaths that are all connected to~$C(x_i)$, each path along an edge of~$E_{\diagdown}(x_i,a)$.
If $j=a+2$ then removing also the second $(a+2)$-edge from~$C(x_{i+1})$ results in two subpaths of~$C(x_{i+1})$ that are both connected to~$C(x_i)$, since the added edges of directions~$a$ and~$a+1$ are incident to antipodal vertices of~$C(x_{i+1})$.
For the last case $j\in \{a,a+1\}$ recall that there is a path~$R$ between~$u$ and~$v$ along the face of~$H$ containing these two vertices and vertices of~$C(x_{i+j})$ for $j\geq 2$, and the path~$R$ does not contain edges of directions~$a$ or~$a+1$.
The path~$R$ and the two edges of $E_{\diagdown}(x_i,a)$ that are not of direction~$j$ connect all vertices of~$C(x_{i+1})$ to $C(x_i)$ along a path not containing any $j$-edges.
This shows that the graph~$H$ indeed satisfies condition~\circled{3}.

The proof of the lemma is complete.
\end{proof}

With Lemma~\ref{lem:min} in hand, we are now in position to prove Theorem~\ref{thm:min}.

\begin{proof}[Proof of Theorem~\ref{thm:min}]
For $k\ge 3$ we define $n\ass 2^k$, $d\ass 2^k-k-1$ and $\rho\ass 2^{k-1}-1=n/2-1$.

If $k=3$, we have $n=8$, $d=4$ and~$\rho=3$.
We apply Lemma~\ref{lem:run} for $\wh{k}\ass k-1=2$, $\wh{n}\ass 2^{\wh{k}}=4$ to obtain a Hamiltonian path~$P$ in~$Q_4$ with $\nu_3(P)=6$ and $\lambda_3(P)=8$.
Therefore, Lemma~\ref{lem:min} yields an $8$-Venn diagram with exactly
\[ 2\cdot\frac{2^8}{8}-6-2\cdot 8-2=40 \]
crossings; see Figures~\ref{fig:v8-primal} and~\ref{fig:v8-dual}.

If $k\ge 4$, we apply Corollary~\ref{cor:run} for $\wh{k}\ass k-1$, $\wh{n}\ass 2^{\wh{k}}=2^{k-1}=n/2$ and $\wh{m}\ass d-\wh{n}=2^{k-1}-k-1$ to obtain a Hamiltonian path~$P$ in~$Q_{\wh{n}+\wh{m}}=Q_d$ with
\[ \nu_\rho(P)=\nu_{\wh{n}-1}(P)=\left(\frac{17}{8}\cdot\frac{2^{\wh{n}}}{\wh{n}}-4\right)2^{\wh{m}}=\frac{17}{8}\cdot\frac{2^n}{n^2}-\frac{2^{n/2}}{n/2} \]
and
\[ \lambda_\rho(P)=\lambda_{\wh{n}-1}(P)=\left(2^{\wh{n}}-\frac{25}{8}\cdot \frac{2^{\wh{n}}}{\wh{n}}+4\right)2^{\wh{m}}=\frac{2^n}{2n}-\frac{25}{8}\cdot\frac{2^n}{n^2}+\frac{2^{n/2}}{n/2}. \]
Therefore, Lemma~\ref{lem:min} yields an $n$-Venn diagram with exactly
\[ 2\cdot\frac{2^n}{n}-\nu_\rho(P)-2\lambda_\rho(P)-2=\left(1+\frac{33}{8n}-\frac{2}{2^{n/2}}-\frac{2n}{2^n}\right)\frac{2^n}{n} \]
many crossings.

Finally, note that the inequality~$\frac{2^n}{n}\le L_n=\lceil\frac{2^n-2}{n-1}\rceil$ holds for all~$n\ge 2$.
\end{proof}

\section{Proof of Theorem~\ref{thm:min-all}}

We now show how to lift the construction of $n$-Venn diagrams for the case when $n$ is a power of~2 to the general case, thus proving Theorem~\ref{thm:min-all}.
The lifting is achieved via the following straightforward doubling construction.

We say that a face of the dual graph of an $n$-Venn diagram is \defi{colorful} if it has length~$2n$ and contains two antipodal vertices of~$Q_n$.
In the primal Venn diagram, such a face corresponds to a crossing involving all $n$ curves for which the cyclic ordering of curves around the crossing can be split into two halves such that every curve appears exactly once in each half.
Consequently, there is a way to draw an additional curve through this crossing that crosses (not only intersects) each of the $n$ existing curves.

\begin{lemma}
\label{lem:doubling}
If there is an $n$-Venn diagram $D$ whose dual graph has a colorful face, then there is an $(n+1)$-Venn diagram with twice as many crossings as~$D$ whose dual graph has a colorful face.
\end{lemma}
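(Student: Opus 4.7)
My plan is to explicitly construct the dual graph of~$D'$ as a plane subgraph of~$Q_{n+1}$ satisfying conditions~\circled{1}--\circled{3}, verify it has $2c$ faces where $c$ is the number of crossings of~$D$, and exhibit a colorful face. Write $F$ for the colorful face of~$Q(D)$, and let $a$ and $\bar{a}\ass[n]\setminus a$ be the two antipodal vertices lying on~$F$. Viewing $Q_{n+1}$ as the disjoint union of the ``lower'' copy (vertices avoiding~$n+1$) and ``upper'' copy (vertices containing~$n+1$) of~$Q_n$, joined by $(n+1)$-edges, I will take $Q(D')\seq Q_{n+1}$ to be the union of two copies of~$Q(D)$, one in each level, together with exactly two $(n+1)$-edges $e_1\ass\{a,a\cup\{n+1\}\}$ and $e_2\ass\{\bar{a},\bar{a}\cup\{n+1\}\}$. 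To embed $Q(D')$ in the plane, I will draw one copy of~$Q(D)$ with $F$ as a bounded face, place a suitably scaled copy of the other inside~$F$ so that the two copies of~$F$ form an annulus, and route $e_1,e_2$ across this annulus connecting corresponding antipodal vertices.

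Conditions~\circled{1} and~\circled{3} will be immediate. Spanning is clear. For condition~\circled{3} with direction~$n+1$, the two relevant subgraphs are the two copies of~$Q(D)$, both connected. For any direction $j\in[n]$, both copies' $j$-subgraphs are already connected in~$Q(D)$, and since exactly one of $j\in a$ or $j\in \bar{a}$ holds, exactly one of $e_1,e_2$ has both endpoints containing~$j$ (joining the ``$j\in x$'' parts across the two copies) and the other has neither endpoint containing~$j$ (joining the ``$j\notin x$'' parts).

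The crux of the argument is condition~\circled{2}, which hinges on the colorfulness of~$F$. The old faces of the two copies of~$Q(D)$ remain faces of $Q(D')$ and satisfy condition~\circled{2} since their edge directions all lie in $[n]\seq[n+1]$. The two copies of~$F$ merge into the annular region, which $e_1$ and $e_2$ split into exactly two new faces. Because $F$ is colorful, each of the two paths from~$a$ to~$\bar{a}$ along~$F$ has length~$n$ and traverses each direction in~$[n]$ exactly once; combining such a path from the outer copy with the corresponding path from the inner copy together with $e_1$ and $e_2$ yields a face of length $2(n+1)$ containing exactly two edges of each of the $n+1$ directions, as required. Moreover, each new face contains the antipodal pair $\{a,\bar{a}\cup\{n+1\}\}$ of~$Q_{n+1}$, so each is colorful. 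Counting, we get $(c-1)+(c-1)+2=2c$ faces, matching the claim. The delicate step of the plan is checking the two-edges-per-direction structure of the two new annular faces; this would fail for a non-colorful face of length~$2n$, which is precisely why the colorfulness hypothesis is needed.
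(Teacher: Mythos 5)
Your proposal is correct and matches the paper's proof in all essentials: both constructions take two copies of $Q(D)$ joined by exactly the two $(n+1)$-edges $\{a,a\cup\{n+1\}\}$ and $\{\bar{a},\bar{a}\cup\{n+1\}\}$ at an antipodal pair on the colorful face, verify conditions (1)--(3) by the same case analysis on the direction~$j$, and use the key fact that each of the two boundary paths of the colorful face flips every direction of~$[n]$ exactly once to certify the two new faces of length~$2(n+1)$. The only difference is cosmetic: you nest the second copy inside the colorful face (splitting an annulus), whereas the paper places the two copies side by side with the colorful face as the outer face; either pairing of the boundary arcs works since both arcs have flip sequences that are permutations of~$[n]$, so this changes nothing in the verification.
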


\begin{proof}
For a set or sequence~$X$ of subsets of~$[n]$ we write $X\cup\{n+1\}$ for the set or sequence of subsets obtained by adding the element~$n+1$ to each subset.
The operation $G\cup\{n+1\}$ for a graph~$G$ with vertex set $V(G)\subseteq 2^{[n]}$ is defined similarly.

\begin{figure}[h!]
\centerline{
\includegraphics{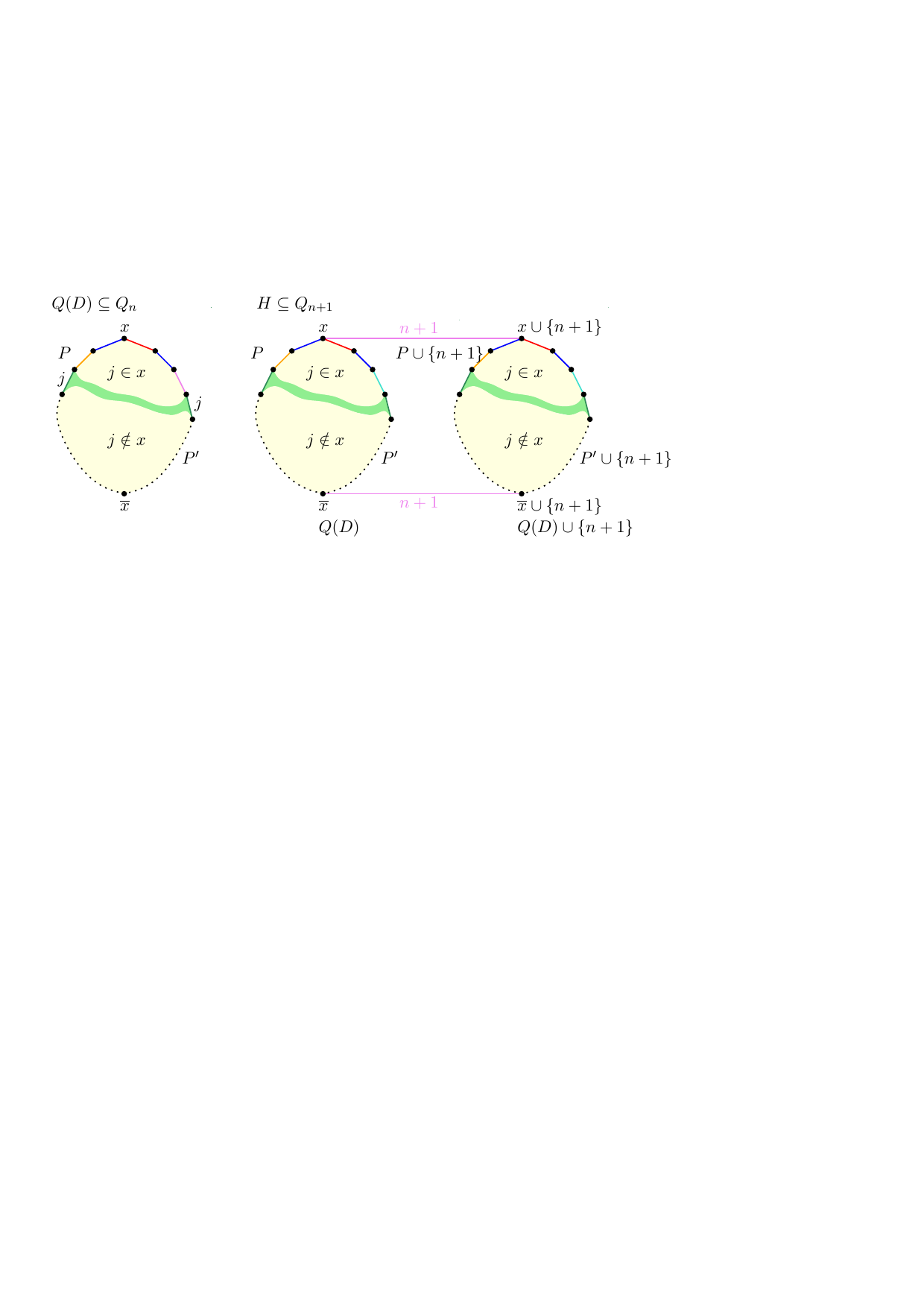}
}
\caption{Illustration of the proof of Lemma~\ref{lem:doubling}.}
\label{fig:doubling}
\end{figure}

The proof of the lemma uses the dual graph~$Q(D)$; recall Section~\ref{sec:dual}.
We may assume w.l.o.g.\ that the colorful face in~$Q(D)$ is the outer face; see Figure~\ref{fig:doubling}.
Let $x$ and $\ol{x}$ be two antipodal vertices on the outer face, and let~$P$ and~$P'$ be the two paths between $x$ and $\ol{x}$ along the outer face.
The flip sequences~$\sigma(P)$ and~$\sigma(P')$ are both permutations of~$[n]$, i.e., each direction of~$[n]$ is flipped exactly once, but possibly in a different order.
We construct the dual graph~$H$ of an $(n+1)$-Venn diagram as follows:
We take two copies of~$Q(D)$, namely $Q(D)$ and $Q(D)\cup\{n+1\}$, embedded into the plane next to each other, and we add the two edges $\{x,x\cup\{n+1\}\}, \{\ol{x},\ol{x}\cup\{n+1\}\}$ between them.
We claim that the resulting plane graph~$H\seq Q_{n+1}$ satisfies conditions \circled{1}-\circled{3}.
Condition~\circled{1} is clear.
Since $Q(D)$ satisfies condition~\circled{2} and the two facial cycles of~$H$ of length~$2(n+1)$ have the flip sequences $(\sigma(P),n+1,\sigma(P'),n+1)$ and $(\sigma(P'),n+1,\sigma(P),n+1)$, the condition~\circled{2} also holds in~$H$.
To verify condition~\circled{3}, let $j\in[n+1]$, and consider the subgraphs of~$H$ induced by all vertices~$x$ with $j\in x$, and with $j\notin x$, respectively.
We distinguish two cases.
If $j\le n$ note that $x$ and $\ol{x}$ are in different induced subgraphs of~$Q(D)$, one for the vertices~$x$ with $j\in x$, the other with $j\notin x$.
Consequently, the edges $\{x,x\cup\{n+1\}\}$ and $\{\ol{x},\ol{x}\cup\{n+1\}\}$ connect the respective induced subgraphs of~$Q(D)$ and~$Q(D)\cup\{n+1\}$ with each other.
On the other hand, if $j=n+1$ then the two induced subgraphs of~$H$ are $Q(D)$ and $Q(D)\cup\{n+1\}$, which are indeed connected.
We conclude that~$H$ satisfies conditions~\circled{1}--\circled{3}, so it is the dual graph of an $(n+1)$-Venn diagram.

Note that the outer face of~$H$ is colorful, as it contains the two antipodal vertices~$x$ and $\ol{x}\cup\{n+1\}$.
Finally, it is easy to see that $H$ has twice as many faces as~$Q(D)$, and therefore the corresponding Venn diagram has twice as many crossings as~$D$.
\end{proof}

With Lemma~\ref{lem:doubling} in hand, we are now in position to prove Theorem~\ref{thm:min-all}.

\begin{proof}[Proof of Theorem~\ref{thm:min-all}]
The duals of the Venn diagrams constructed for Theorem~\ref{thm:min} all have a colorful face, namely the outer face~$C(x_1)$, so applying Lemma~\ref{lem:doubling} ($m$ times) proves the theorem.
\end{proof}

\section{Remarks and open problems}
\label{sec:open}

We conclude with some remarks and challenging open problems.

\begin{itemize}[leftmargin=4mm]
\item
Is there a minimum 8-Venn diagram, i.e., one with only 37 crossings?
The best one we found has 40 crossings; see Figure~\ref{fig:v8-primal}.

\item
How many (not necessarily simple) $n$-Venn diagrams are there for $n=3,4,5$?
How many of them are minimum?

\item
Lemma~\ref{lem:run} gives a Hamiltonian path~$P$ in~$Q_n$, $n=2^k$, with
\[ \nu_{n-1}(P)+2\lambda_{n-1}(P)=2\cdot 2^n-\frac{33}{8}\cdot\frac{2^n}{n}+4. \]
Is there another construction that would decrease the constant~$\frac{33}{8}$?
If so, then this would directly improve the same constant in Theorems~\ref{thm:min} and~\ref{thm:min-all}.

Since every $(n-1)$-run contains at most $(n-1)$ elements (length at most $n-2$) and is separated from the next such run by at most one flip~$n$, we have $\nu\geq 2^n/n$ (for simplicity, we ignore the subscript~$n-1$ and the argument~$P$).
Using that $\nu+\lambda\le 2^n$ yields the upper bound
\[ \nu+2\lambda=2(\nu+\lambda)-\nu \le 2\cdot 2^n-\frac{2^n}{n}. \]

\item
A long-standing problem due to Slater~\cite{slater_1979} is whether there exists a Hamiltonian path~$P$ in~$Q_n$ such that any two consecutive entries of the flip sequence differ by~$\pm 1$.
Put differently, if we write $\mu(P)$ for the number of such consecutive entries of~$\sigma(P)$, then the goal is to find a Hamiltonian path~$P$ with $\mu(P)=2^n-2$.
While such a path exists for $n\le 6$, it does not exist for $n=7$~\cite{slater_1989} and $n=8$ (computer experiments by Dimitrov, Gregor and Lu\v{z}ar; see \cite[Problem~10]{MR4649606}).
We clearly have $\mu(P)\geq\lambda_n(P)\geq \lambda_{n-1}(P)$, so Lemma~\ref{lem:run} and Corollary~\ref{cor:run} yield a Hamiltonian path~$P$ for which $\mu(P)=(1-o(1))\cdot (2^n-2)$, which can be seen as an approximate solution to Slater's problem.
\end{itemize}

\bibliographystyle{alpha}
\bibliography{refs}

\end{document}